\newtheorem{theorem}{Theorem}[section]
\newtheorem{lemma}[theorem]{Lemma}
\newcommand\restr[2]{{% we make the whole thing an ordinary symbol
		\left.\kern-\nulldelimiterspace % automatically resize the bar with \right
		#1 % the function
		\vphantom{\big|} % pretend it's a little taller at normal size
		\right|_{#2} % this is the delimiter
}}
\theoremstyle{definition}
\theoremstyle{remark}
\newtheorem{remark}[theorem]{Remark}
\newcommand{\al}{\alpha}
\newcommand{\be}{\beta}
\newcommand{\de}{\delta}
\newcommand{\ep}{\varepsilon}
\newcommand{\ga}{\gamma}
\newcommand{\ka}{\kappa}
\newcommand{\la}{\lambda}
\newcommand{\om}{\omega}
\newcommand{\si}{\sigma}
\newcommand{\De}{\Delta}
\newcommand{\Om}{\Omega}
\def\RR{\mathbb{R}}
\def\TT{\mathbb{T}}
\def\HH{\mathbb{H}}
\renewcommand\SS{\mathbb{S}}
\newcommand{\cO}{{\mathcal O}}
\newcommand{\pd}{\partial}
\newcommand\minus\backslash
\newcommand{\ms}{\mspace{1mu}}
\newcommand\lan\langle
\newcommand\ran\rangle
\newcommand{\supp}{\operatorname{supp}}
\DeclareMathOperator\dist{dist}
\renewcommand\leq\leqslant
\renewcommand\geq\geqslant
\newlength{\intwidth}
\newcommand\coul{^{\mathrm{C}}}
\newcommand\any{^{\mathrm{A}}}
\newcommand\harm{^{\mathrm{H}}}
\newcommand\BOm{\overline\Om}
\DeclareMathOperator{\spn}{span}
\newcommand\dVol{ d\ms\mathrm{Vol}}
\begin{document}

\title[Schr\"odinger operators on hyperbolic spaces]{Inverse localization and global approximation\\ for some Schr\"odinger operators\\ on hyperbolic spaces}

 %    Information for first author
 \author{Alberto Enciso}
 %    Address of record for the research reported here
 \address{Instituto de Ciencias Matem\'aticas, Consejo Superior de
   Investigaciones Cient\'\i ficas, 28049 Madrid, Spain}
 \email{aenciso@icmat.es}
 %    \thanks will become a 1st page footnote.

 \author{Alba Garc\'\i a-Ruiz}
\address{Instituto de Ciencias Matem\'aticas, Consejo Superior de
   Investigaciones Cient\'\i ficas, 28049 Madrid, Spain}
 \email{alba.garcia@icmat.es}

  %    Information for second author
 \author{Daniel Peralta-Salas}
 \address{Instituto de Ciencias Matem\'aticas, Consejo Superior de
   Investigaciones Cient\'\i ficas, 28049 Madrid, Spain}
 \email{dperalta@icmat.es}

%%    General info
%\subjclass[2010]{35B38, 58J05, 58K45}
%\date{\today}
%
%\keywords{ }
%
\begin{abstract}
We consider the question of whether the high-energy eigenfunctions of certain Schr\"odinger operators on the $d$-dimensional hyperbolic space of constant curvature $-\ka^2$ are flexible enough to approximate an arbitrary solution of the Helmholtz equation $\Delta h+h=0$ on $\RR^d$, over the natural length scale $\cO(\la^{-1/2})$ determined by the eigenvalue $\la\gg1$. This problem is motivated by the fact that, by the asymptotics of the local Weyl law, approximate Laplace eigenfunctions do have this approximation property on any compact Riemannian manifold. In this paper we are specifically interested in the Coulomb and harmonic oscillator operators on the hyperbolic spaces $\HH^d(\ka)$. As the dimension of the space of bound states of these operators tends to infinity as $\ka\searrow0$, one can hope to approximate solutions to the Helmholtz equation by eigenfunctions for some $\ka>0$ that is not fixed a priori. Our main result shows that this is indeed the case, under suitable hypotheses. We also prove a global approximation theorem with decay for the Helmholtz equation on manifolds that are isometric to the hyperbolic space outside a compact set, and consider an application to the study of the heat equation on~$\HH^d(\ka)$. Although global approximation and inverse approximation results are heuristically related in that both theorems explore flexibility properties of solutions to elliptic equations on hyperbolic spaces, we will see that the underlying ideas behind these theorems are very different.
\end{abstract}

\maketitle

\section{Introduction}

In this paper we are concerned with high energy eigenfunctions of Schr\"odinger operators on a complete Riemannian $d$-dimensional manifold~$M$ with $d\geq 2$, which we will eventually choose to be a hyperbolic space. To this end, we consider nontrivial square-integrable solutions $\psi_j$ to equations of the form
\begin{equation}\label{Schr}
\De_M \psi_j+(\la_j-V)\psi_j=0\,,
\end{equation}
where $\De_M$ is the Laplace--Beltrami operator on the manifold, the potential~$V$ is a well-behaved function and $\la_j\gg1$ are eigenvalues. It is well known that rescaled limits of high energy eigenfunctions of a Schr\"odinger operator are connected with solutions to the Helmholtz equation. Recall that the Helmholtz equation reads as
\begin{equation*}
	\De_M u+\la u=0
\end{equation*}
for some constant $\la>0$, and that in the $d$-dimensional Euclidean space one can often take advantage of the scaling properties of the equation and set $\la=1$:
\begin{equation}\label{Helm}
\De v+ v=0\,.	
\end{equation}

It is standard that, over geodesic balls of the natural radius determined by the eigenvalue, high energy eigenfunctions on a manifold, such as solutions to~\eqref{Schr}, behave essentially like solutions to the Euclidean Helmholtz equation~\eqref{Helm} on the unit ball. Roughly speaking, this is because, if $\bar x$ are normal coordinates on a geodesic ball centered at a certain point of the manifold and one considers the rescaled coordinates $x:=\la_j^{1/2}\bar x$, a straightforward computation shows that Equation~\eqref{Schr} can be rewritten on the Euclidean ball $|x|<1$ as
\begin{equation*}
\De \psi_j + \psi_j = \mathcal{O}\left(\la_j^{-1}\right)\,,
\end{equation*}
where $\De$ denotes the ordinary Laplacian in the coordinates~$x$ and where the error term depends on derivatives of~$\psi_j$ up to second order.

It is also known that, under mild assumptions, one can pick a sequence of approximate eigenfunctions on the manifold (meaning, for concreteness, a linear combination~$U_\la$ of eigenfunctions with eigenvalues $\la_j$ lying on the interval $I_\la:=[\la,(1+\de)\la]$, with $\la\gg1$ and some arbitrarily small but fixed $\de>0$) whose behavior on a ball of radius $\la^{-1/2}$ centered at a fixed point~$p$ reproduces that of any fixed solution~$v$ to the Helmholtz equation~\eqref{Helm} on~$\RR^d$ modulo a small error. More precisely, by the well-known asymptotics for the local Weyl law~\cite{Hormander}, given any solution~$v$, one can construct $U_\la$ as above such that
\begin{equation}\label{Ula}
\left\|U_\la\circ \exp^M_p(\la^{-1/2}\,\cdot)-v\right\|_{C^k(B)}\to 0
\end{equation}
as $\la\to\infty$, where $\exp_p^M$ is the exponential map at a point~$p\in M$, $B\subset\RR^d$ is a bounded domain and $k$ is any fixed integer. (In fact, it is well known that the size of the interval $I_\la$ can be made substantially smaller, see e.g.~\cite{CanHanin}.) Nontrivial extensions of this property, which consider random combinations of eigenfunctions with eigenvalues in the interval $I_\la$, can be consulted in~\cite{CanSar} and in the references therein.

A subtler question is whether one can replace the approximate eigenfunctions~$U_\la$ by bona fide eigenfunctions~$\psi_j$. Remarkably, there are only a few cases where it is known that high-energy eigenfunctions can approximate, on suitable scales, arbitrary solutions to the Helmholtz equation.  For short, when this happens, we will say that the Schr\"odinger operator $-\De_M+V$ has the {\em inverse localization property}\/. In general, one does not expect that this should hold for a general Riemannian manifold, even when the potential is identically~0.

In fact, Jung and Zelditch have constructed~\cite{Zelditch} a Riemannian $3$-manifold for which all the (nonconstant) Laplace eigenfunctions have exactly two nodal domains, and this can be used to show that there are solutions to the Helmholtz equation that cannot be approximated by rescaled eigenfunctions on this manifold in the sense of~\eqref{Ula}. Also, in \cite{Yo} we have recently shown that the inverse localization property does not hold on generic flat tori. Specifically, whether the approximating property holds or not depends solely on the arithmetic properties of the spectrum of the torus, and the set of tori for which it holds has measure zero.

The list of Schr\"odinger operators which are known to possess the inverse localization property is remarkably short. To our best knowledge, the only known examples are the harmonic oscillator operator on~$\RR^d$~\cite{RMI} and the Laplace operator on certain tori~\cite{Yo} and on the round sphere $\SS^d$ and all Riemannian quotients thereof~\cite{refs}. The Coulomb operator~\cite{JEMS} exhibits an analogous localization property, where the Helmholtz equation must be replaced by a ``zero energy Coulomb equation'' to account for the fact that one considers eigenvalues that do not tend to infinity but to~$0$, which is the bottom of the essential spectrum. The curl operator on~$\TT^3$ and~$\SS^3$, whose eigenfunctions are known as Beltrami fields, also exhibits an inverse localization property~\cite{ASENS}. These facts have found several applications in previous work of two of the authors, including the proof of a conjecture of Berry~\cite{Berry} on eigenfunctions whose nodal sets realize a certain knot~\cite{JEMS,RMI} or the construction of stationary Euler flows with knotted vortex lines~\cite{ASENS}.

Note that in all the known examples, the eigenvalues have a very high multiplicity. As shown by Uhlenbeck~\cite{Uhlenbeck}, a generic perturbation of these examples will split the eigenspaces, leading to Schr\"odinger operators for which all the eigenvalues have multiplicity~1 and for which, in general, one does not expect inverse localization to hold. However, it stands to reason that some kind of inverse localization property should hold provided that the perturbation preserves the multiplicity of the eigenvalues.

Our objective in this paper is to explore this heuristic idea in the context of  Schr\"odinger operators on the $d$-dimensional hyperbolic space $\HH^d(\ka)$ of negative constant sectional curvature $-\ka^2$. Specifically, we are interested in the Coulomb and harmonic oscillator operators
\begin{equation*}
H_\ka\coul :=-\De_\ka+ \al V_\ka\coul\,,\qquad H_\ka\harm :=-\De_\ka+ \al V_\ka\harm\,,
\end{equation*}
where $\De_\ka$ is the Laplace--Beltrami operator, $V_\ka\coul$ and $V_\ka\harm$ are the Coulomb and harmonic oscillator potentials on $\HH^d(\ka)$ and $\al$ is a fixed positive constant that will not play any role in the arguments. Precise definitions and explicit formulas will be provided in Section~\ref{S.eigenfunctions}.

These operators have long played a significant role in mathematics and in physics. Indeed, the study of the classical counterparts of these systems can be traced back~\cite{Shc} to Lobachevski circa 1835 in the case of the Coulomb (or Kepler) potential and to Liebman in 1902 in the case of the harmonic oscillator. The study of the corresponding quantum models respectively started in Schr\"odinger in 1940 and with Higgs in 1979. It has long been known that these models possess superintegrability properties analogous to those of their Euclidean analogs, as discussed in a broader context in~\cite{Ball}. One should note that, even though the Coulomb system is typically considered in any dimension $d\geq2$, the potential is always a formal extension of the 3-dimensional case in the sense that it diverges as the inverse of the distance to the point $p_0$ where the point charge is located. In particular, the equation $\De_\ka V_\ka\coul= c_{d,\ka}\, \de_{p_0}$, where $\de_{p_0}$ is the Dirac measure supported on~$p_0$, only holds in dimension~3.

Even though the Coulomb and harmonic oscillator operators on $\HH^d(\ka)$ are formally very similar to those of their Euclidean counterparts, their spectral properties are rather different. In particular, the space of bound states is finite dimensional for both operators. We will elaborate on this point in Section~\ref{S.eigenfunctions}. An easy consequence of this fact is that the operators $H_\ka\any$, with $\mathrm{A}=\mathrm{C}$ or $\mathrm{H}$, cannot have the inverse localization property for any $\ka>0$.

However, the main result of this paper is that a suitable analog of the inverse localization principle for parametric families of operators does indeed hold. The basic idea is that, as the dimension of the space of bound states of $H_\ka\any$ tends to infinity as $\ka\searrow 0$, one can try to approximate solutions to the Helmholtz equation by eigenfunctions of $H_\ka\any$ for some~$\ka>0$ that is not fixed a priori. For simplicity, in the Introduction we only deal with the harmonic oscillator operator.  As in the Euclidean case, the analogous statement for the Coulomb operator $H_\ka\coul$ is necessarily less transparent, so we have chosen relegate it to Theorem~\ref{T.Coul} in the main text. To state the theorem, let us fix a point~$p_0\in \HH^d(\ka)$ and denote by $\exp_{p_0}^\ka$ the exponential map at~$p_0$ defined by the hyperbolic metric of curvature $-\ka^2$.

\begin{theorem}\label{T1}
	Let $v$ be an even or odd solution to the Helmholtz equation~\eqref{Helm} on $\RR^d$, with $d\geq2$. Given any $\ep>0$ small, any integer $k$, and any ball $B\subset\mathbb{R}^d$, there exist some $\ka>0$ and an eigenfunction $\psi$ of $H_\ka\harm$ such that
	\begin{equation*}
	\|\psi\circ \exp_{p_0}^\ka(\lambda^{-1/2} \,\cdot)-v\|_{C^k(B)}<\ep\,.
	\end{equation*}
	Here $\lambda$ is the corresponding eigenvalue and $p_0\in \HH^d(\ka)$ is the point where the potential $V_\ka\harm$ attains its global minimum.
\end{theorem}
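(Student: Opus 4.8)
The plan is to reduce the statement to an explicit asymptotic analysis of the bound states of $H_\ka\harm$ in a joint high-energy, small-curvature regime, and then to exploit the high multiplicity of the eigenspaces to match an arbitrary fixed-parity Helmholtz solution term by term in its spherical-harmonic expansion. Concretely, I would start from the separated-variables eigenfunctions provided in Section~\ref{S.eigenfunctions}: writing a point of $\HH^d(\ka)$ in geodesic polar coordinates $(r,\theta)$ about $p_0$, the bound states have the form $\psi_{n,l,m}=R^\ka_{n,l}(r)\,Y_{l,m}(\theta)$, where $Y_{l,m}$ are the spherical harmonics on $\SS^{d-1}$ and $R^\ka_{n,l}$ is an explicit (hypergeometric-type) radial function, with eigenvalue $\la^\ka_N$ depending only on the principal quantum number $N:=2n+l$. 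The key structural input is that within a fixed eigenspace all admissible angular momenta $l$ share the parity of $N$; this is precisely the algebraic reason why the construction can only reach solutions $v$ of a single parity, and why the hypothesis that $v$ be even or odd is natural.

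Next I would record the two facts that turn the curvature into a usable free parameter. First, the potential $V_\ka\harm$ is bounded, so for each $\ka>0$ the operator has only finitely many bound states below the bottom of its essential spectrum, but that threshold diverges (like a negative power of $\ka$) as $\ka\searrow0$; hence the shell $N$ consists entirely of genuine bound states as soon as $\ka$ is small enough relative to $N$. Second, near $p_0$ both the metric and the potential are Euclidean to leading order, $\sinh^2(\ka r)/\ka^2=r^2+\mathcal{O}(\ka^2 r^4)$ and $\al V_\ka\harm$ agreeing to leading order with a Euclidean harmonic well $\propto r^2$, so that $\la^\ka_N$ behaves like a Euclidean-oscillator eigenvalue $\sim \mathrm{const}\cdot N$ for $N$ fixed and $\ka$ small. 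I would therefore work in the regime $N\to\infty$ with $\ka=\ka(N)\to0$ chosen as a suitable negative power of $N$, which simultaneously sends $\la^\ka_N\to\infty$ and keeps all of shell $N$ bound.

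The analytic heart of the argument is a Mehler--Heine--type asymptotic for the radial functions. Rescaling $r=(\la^\ka_N)^{-1/2}\rho$ and normalizing the radial parts appropriately, I would prove that, uniformly for $l\le L$ and in $C^k_{\mathrm{loc}}(\rho)$,
\[
\widetilde R^\ka_{n,l}\big((\la^\ka_N)^{-1/2}\rho\big)\;\longrightarrow\;\rho^{-(d-2)/2}\,J_{l+(d-2)/2}(\rho)
\]
as $N\to\infty$, the right-hand side being exactly the regular radial profile in the spherical-harmonic expansion of Helmholtz solutions. The cleanest route is presumably a two-step comparison: first show that over the shrinking region $\ka r\ll1$ the hyperbolic radial equation converges to the Euclidean harmonic-oscillator radial equation (so $\widetilde R^\ka_{n,l}$ is close to the corresponding Laguerre function), and then invoke the classical limit of Laguerre functions to Bessel functions at this scale. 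Making this convergence quantitative, uniformly in $l$ and jointly with $k$ derivatives, is the step I expect to be the main obstacle, since it requires genuine control of the special functions rather than a soft compactness argument.

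With the radial asymptotics in hand the construction is immediate. Fixing an even or odd $v$, I would expand it as $v=\sum_{l,m}a_{l,m}\,\rho^{-(d-2)/2}J_{l+(d-2)/2}(\rho)\,Y_{l,m}(\theta)$ over the $l$ of the parity of $v$, truncate at $l\le L$ so that the tail is smaller than $\ep/2$ in $C^k(B)$, and pick $N\ge L$ of the same parity. Setting
\[
\psi:=\sum_{\substack{l\le L\\ l\equiv N\,(\mathrm{mod}\,2)}}\ \sum_{m} A_{l,m}\,\psi_{(N-l)/2,\,l,\,m},
\]
with the coefficients $A_{l,m}$ chosen to cancel the explicit ($N$-dependent) normalization constants and reproduce the $a_{l,m}$, yields a bona fide eigenfunction of $H_\ka\harm$ with eigenvalue $\la=\la^\ka_N$. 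Finally, since the rescaled exponential map $\exp_{p_0}^\ka(\la^{-1/2}\,\cdot)$ converges to the Euclidean identity on $B$ (again because $\ka\la^{-1/2}\to0$), composing and using the radial limit term by term gives $\|\psi\circ\exp_{p_0}^\ka(\la^{-1/2}\,\cdot)-v\|_{C^k(B)}<\ep$ for $N$ large, which is the claim. The Coulomb case stated in Theorem~\ref{T.Coul} would follow the same outline, with the oscillator shells and the Laguerre-to-Bessel limit replaced by the corresponding Coulomb bound-state structure.
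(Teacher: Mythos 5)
Your proposal takes essentially the same route as the paper's own proof: a spherical-harmonic expansion of $v$ with truncation, a radial asymptotic lemma (the paper's Lemma~\ref{lema1}) established by exactly the two-step comparison you describe --- Jacobi-to-Laguerre as $\ka\to0$ via \cite[(5.3.4)]{Szego}, then Laguerre-to-Bessel via Hilb's formula as $n\to\infty$ --- and the assembly of a genuine eigenfunction inside a single degenerate shell $N=2n+l$, using the parity of $v$ so that all summands share one eigenvalue and adjusting coefficients to cancel the normalization constants. The structure, the role of the curvature as a free parameter, and even the step you flag as the main technical obstacle (uniform $C^k$ control of the special-function asymptotics) coincide with the paper's argument.
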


%
%Following this line of thought, in Theorem~\ref{T.QES} we shall present a generalization of a result which applies to any family of perturbations of the Euclidean Coulomb and harmonic oscillator operators, depending on a parameter~$q>0$, which preserves the multiplicity of the first $N(q)$ eigenvalues, with $N(q)\to\infty$ as $q\searrow0$. Theorem~\ref{T1} can be understood as a corollary of this result. More general classes of operators to which this theorem applies are e.g.\ the families of quasi-exactly solvable deformations of the Coulomb and harmonic oscillator potentials constructed by Quesne in~\cite{refs}.

Let us point out that assuming that the function~$v$ in Theorem~\ref{T1} satisfies the Helmholtz equation in all of~$\RR^d$ or in a smaller domain is actually irrelevant, as it is whether one aims to approximate the function on a ball or on any other bounded domain~$\Om$ whose complement $\RR^d\backslash \overline{\Om}$ is connected. This is because one can prove~\cite{Acta15} that a function satisfying the Helmholtz equation on a domain containing the closure of a domain~$\Om$ as above, which would be an essentially minimal assumption for the purposes of Theorem~\ref{T1}, can be approximated in the $C^k(\Om)$ norm by a solution of the Helmholtz equation on the whole~$\RR^d$ with the sharp fall off rate at infinity (i.e., $|v(x)|<C(1+|x|)^{(1-d)/2}$ if $d\geq2$).

Let us provide some context for this result. It is a general fact that a solution~$w$ of a nice linear elliptic equation $Lw=0$ on a domain containing the closure of a bounded domain~$\Om$ with connected complement (be it on $\RR^d$ or on a noncompact manifold) can be approximated in $C^k(\Om)$ by a global solution, that is, a solution to the equation on the whole space. This kind of results, which generalize Runge's classical theorem in complex analysis, goes back to the work of Lax, Malgrange and Browder, and are known as {\em global approximation theorems}\/~\cite{Browder}. The key property of the Helmholtz equation on~$\RR^d$, established in~\cite{Acta15}, is that one can additionally assume a decay condition at infinity for the global solution, which is moreover sharp.

As we shall see next, one can also prove a global approximation theorem with decay for the Helmholtz equation on a hyperbolic space. However, it is worth stressing that the underlying ideas behind this result and behind an inverse localization theorem are rather different. To illustrate this fact, we shall state the following more general result, which does not follow from the approximation theorems with decay for essentially flat elliptic operators proved in~\cite{Duke}. Recall that a $d$-dimensional Riemannian manifold $M$ is {\em hyperbolic outside a compact set}\/ if there exist compact sets $K_1\subset M$ and $K_2\subset\HH^d(\ka)$ such that $M\backslash K_1$ and $\HH^d(\ka)\backslash K_2$ are isometric, for some $\ka>0$. We will consider manifolds for which there is a point $p_0\in M$ whose injectivity radius is infinite, which implies in particular that $M$ is diffeomorphic to $\RR^d$. An important class of manifolds with this property are those of Cartan--Hadamard, that is, simply connected manifolds with  nonpositive sectional curvature. To state this result, we will denote by $\rho_M(x)=\rho_M(x,p_0)$ the geodesic distance in~$M$ from $x$ to the point $p_0\in M$.

\begin{theorem}\label{T2}
Consider a $d$-dimensional Riemannian manifold $M$ with $d\geq 2$ which is hyperbolic outside a compact set and has a point $p_0$ with infinite injectivity radius. Fix an integer~$k$ and a constant~$\ep>0$. Suppose that the function~$w$ satisfies the Helmholtz equation
	\begin{equation}\label{HelmM}
	\De_M w+ \la w=0
	\end{equation}
	in a neighbourhood of the closure of a bounded domain~$\Om\subset M$%$\frac{(d-1)^2\ka^2}{4}<\la<\frac{(d-1)^2}{4}$ (or just
	 %, if we do it with the heat kernel)
	 , and that the complement $M\backslash \BOm$ is connected. We also assume that $\lambda$ belongs to the continuous spectrum of the Laplacian, i.e., $\la>\left[\frac{(d-1)\ka}{2}\right]^2$. Then there exists a solution~$v$ of the Helmholtz equation~\eqref{HelmM} on all of~$M$ which approximates~$w$ as
	\begin{equation*}
	\|w-v\|_{C^k(\Om)}<\ep
	\end{equation*}
	and which satisfies the sharp Agmon--H\"ormander decay condition
% in an $L^2$ sense where $B_R$ is the geodesic ball of radius $R$ centered at $p_0$; and pointwise
\begin{equation} \label{eq.decayAH}
\sup _{R>0} \frac{1}{R} \int_{B_{R}}|v(x)|^{2} \,\dVol(x)<\infty \,.
\end{equation}
Here $B_R$ is the geodesic ball of radius $R$ centered at $p_0$ and $\dVol$ is the Riemannian volume form on $M$. In the case that $M$ is the hyperbolic space $\HH^d(\kappa)$, the decay of $v$ is pointwise:
\begin{equation}\label{eq.decaypoint}
	|v(x)|< Ce^{-\frac{d-1}{2}\ka \rho_M(x)}.
\end{equation}
\end{theorem}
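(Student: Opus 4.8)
The plan is to run a duality (Hahn--Banach) argument of Lax--Malgrange--Browder type, but to carry it out \emph{inside} the class of solutions obeying the decay bound, so that the sharp decay of the approximant comes for free. Let $\cS$ denote the set of restrictions to $\Om$ of those solutions of \eqref{HelmM} on all of $M$ that satisfy \eqref{eq.decayAH}; this is a linear subspace of $C^k(\BOm)$, and I claim its closure contains $w|_\Om$. If it did not, Hahn--Banach would produce a functional on $C^k(\BOm)$, represented by a distribution $\mu$ of order at most $k$ supported in $\BOm$, annihilating every element of $\cS$ while $\langle\mu,w\rangle\neq0$. The whole proof then reduces to ruling out such a $\mu$, i.e.\ to showing that annihilation of all decaying global solutions forces $\langle\mu,w\rangle=0$. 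Observe that once density is established the approximant $v$ is simply one member of $\cS$, so it automatically satisfies \eqref{eq.decayAH} (and, on $\HH^d(\ka)$, \eqref{eq.decaypoint}).

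The first step is to describe $\cS$ through scattering eigenfunctions. Writing $\la=\rho^2+\mu^2$ with $\rho:=\tfrac{(d-1)\ka}{2}$ and $\mu>0$, on a hyperbolic end the relevant solutions are the hyperbolic plane waves $P_b(x)=e^{(\iu\mu+\rho)\langle x,b\rangle}$ indexed by boundary points $b\in S^{d-1}$, where $\langle x,b\rangle$ is the signed horocyclic (Busemann) coordinate. On $M$ the corresponding global objects are the generalized eigenfunctions (distorted plane waves) $E_b$ of $\De_M$ at energy $\la$, which solve \eqref{HelmM} on all of $M$ and agree on the end with a sum of $P_b$ and a scattered wave; for $M=\HH^d(\ka)$ one simply has $E_b=P_b$. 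By the Helgason/Strichartz characterization of the continuous spectrum, the solutions obeying \eqref{eq.decayAH} are exactly the superpositions $v_f=\int_{S^{d-1}}E_b\,f(b)\,db$ with $f\in L^2(S^{d-1})$, and on $\HH^d(\ka)$ the Harish-Chandra $c$-function asymptotics of the Poisson transform yield precisely the pointwise bound \eqref{eq.decaypoint} together with the borderline, hence sharp, estimate \eqref{eq.decayAH}. Since $\mu$ is compactly supported, $\Phi(b):=\langle\mu,E_b\rangle$ is a smooth function on $S^{d-1}$, and pulling the continuous functional under the integral shows that $\mu$ annihilates every $v_f$ exactly when $\Phi\equiv0$.

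The second step interprets $\Phi$ as scattering data and converts $\Phi\equiv0$ into a uniqueness statement. Using the limiting absorption principle for $\De_M+\la$---available because $M$ is isometric to $\HH^d(\ka)$ outside a compact set---I set $u:=R^+_\la\mu$, the outgoing potential, so that $(\De_M+\la)u=\mu$ on $M$ and, since $\supp\mu\subset\BOm$, $u$ solves \eqref{HelmM} on the connected open set $M\backslash\BOm$. The boundary asymptotics of the outgoing resolvent kernel on a hyperbolic end exhibit the leading radiation coefficient of $u$ as a nonzero multiple (up to conjugation) of $\Phi$; thus $\Phi\equiv0$ means that $u$ has vanishing far field while still obeying the outgoing radiation condition. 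A Rellich-type uniqueness theorem on the hyperbolic end then forces $u\equiv0$ on the unbounded region, and Aronszajn's strong unique continuation, together with the connectedness of $M\backslash\BOm$, propagates this across the compact core to give $u\equiv0$ on all of $M\backslash\BOm$. Finally, choosing a slightly larger domain $\Om'$ on which $w$ still solves \eqref{HelmM} and applying Green's identity, $\langle\mu,w\rangle=\int_{\Om'}\big[(\De_M+\la)u\big]\,w=\int_{\partial\Om'}\big(u\,\partial_\nu w-w\,\partial_\nu u\big)=0$, because $u$ and its normal derivative vanish near $\partial\Om'$. This contradicts $\langle\mu,w\rangle\neq0$ and finishes the argument.

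The main obstacle is the scattering step on a manifold that is only \emph{asymptotically} hyperbolic: lacking explicit formulas for the resolvent, one must establish the limiting absorption principle and, more delicately, the precise asymptotic expansion of the outgoing resolvent kernel near the boundary at infinity, in order to identify the radiation coefficient with $\Phi$ and to prove the Rellich uniqueness theorem directly on the hyperbolic end. The remaining ingredients---the Helgason/Poisson representation of the decaying solutions with its $c$-function asymptotics, and the unique continuation across the core---are comparatively standard, but their interface with the radiation condition (matching the outgoing potential against the sign-symmetric Agmon--H\"ormander class) is where the bookkeeping must be done with care.
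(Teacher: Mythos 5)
Your strategy is genuinely different from the paper's, and it is a recognizable one: it is essentially the Euclidean argument of \cite{Acta15} (duality against the class of decaying solutions, characterized as superpositions of plane waves with $L^2$ densities, plus a Rellich-type uniqueness theorem) transplanted to the hyperbolic setting. The paper instead argues constructively: it represents a cut-off of $w$ by a Dirichlet Green's function, sweeps the poles outside a large geodesic ball by a Hahn--Banach argument that needs only elliptic unique continuation (Lemma~\ref{vualayage}), expands the result on the exactly hyperbolic end in spherical harmonics times Legendre conical functions, truncates, and finally corrects the small, compactly supported error by the outgoing resolvent $(\De_M+\la+i0)^{-1}$ of \cite{Perry}. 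The payoff of that route is that the \emph{only} scattering-theoretic input is the Agmon--H\"ormander bound for this resolvent. Your route, by contrast, rests entirely on the assertion that the solutions of \eqref{HelmM} on $M$ obeying \eqref{eq.decayAH} are \emph{exactly} the superpositions $v_f=\int_{\SS^{d-1}} E_b f(b)\,db$ with $f\in L^2(\SS^{d-1})$. On $\HH^d(\ka)$ this is a known Strichartz-type theorem, but on a manifold that is merely hyperbolic outside a compact set it is not an off-the-shelf fact: it requires constructing the distorted plane waves $E_b$, proving the limiting absorption principle, identifying the far field of $R^+_\la\mu$ with $\Phi$, and proving the completeness statement (the inclusion ``every Agmon--H\"ormander solution is such a superposition'' is the deepest part, and it is precisely what makes your second step equivalent to $\Phi\equiv0$). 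So the crux of your proof is an unproven package at least as heavy as the theorem itself; you flag the resolvent asymptotics as the obstacle, but the characterization of the class $\cS$ is itself part of that obstacle, not a standard fact one can quote on $M$.

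There is also a concrete error in the decay bookkeeping. You claim that once density is established the approximant inherits the decay ``for free'' because it lies in $\cS$; this fails for \eqref{eq.decaypoint} if $\cS$ consists of superpositions with merely $L^2$ densities. For $f\in L^2(\SS^{d-1})$ one only gets $|v_f(x)|\leq C\|f\|_{L^2}$, by Cauchy--Schwarz and the fact that the $L^2_b$-norm of the hyperbolic plane wave is $O(1)$ (the square of its modulus is the harmonic Poisson kernel), and a gliding-hump superposition of concentrating bumps produces $f\in L^2$ for which $e^{\frac{d-1}{2}\ka\rho_M(x)}|v_f(x)|$ is unbounded: the Harish-Chandra $c$-function asymptotics hold pointwise only for sufficiently regular densities. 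The fix is routine --- run the duality against smooth densities, since $\int_{\SS^{d-1}}\Phi f=0$ for all smooth $f$ still forces $\Phi\equiv0$ --- but as written this step is a gap. Note that the paper obtains \eqref{eq.decaypoint} on $\HH^d(\ka)$ by a different mechanism: there the global solution is a \emph{finite} sum of spherical harmonics times Legendre functions, each of which decays pointwise at the stated rate.
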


\begin{remark}
Using that $M$ is isometric to the hyperbolic space outside some compact set, and the expression of $\dVol$ in geodesic coordinates centered at $p_0$, it is straightforward to check that the decay~\eqref{eq.decayAH} is an averaged version of the pointwise decay~\eqref{eq.decaypoint}. We stress that the condition $\lambda>\left[\frac{(d-1)\ka}{2}\right]^2$, where $\left[\frac{(d-1)\ka}{2}\right]^2$ is the bottom of the purely continuous spectrum of $\Delta_M$, is only used to ensure the decay bound~\eqref{eq.decayAH}; a global approximation theorem with no control at infinity of the global solution certainly holds without the aforementioned assumption.
\end{remark}

\begin{remark}
Note that we are not assuming that the curvature of~$M$ is asymptotically small. Furthermore, a result of Uhlenbeck~\cite{Uhlenbeck} ensures that all the eigenvalues of the Dirichlet Laplacian are nondegenerate for generic compact perturbations of~$\HH^d(\ka)$. Therefore, the validity of this result (unlike that of Theorem~\ref{T1}) is in no way related to the multiplicity of the eigenvalues.
\end{remark}

The decay condition of a global approximation theorem is key in many applications, such as nonlinear equations~\cite{Allen}. Although we shall not pursue this direction here, in Section~\ref{S.GAT} we will exploit this fact to derive a global approximation theorem with decay for the heat equation on $\HH^d(\ka)$ (Theorem~\ref{T.heat}).

\section{The harmonic oscillator and Coulomb operators in hyperbolic spaces}
\label{S.eigenfunctions}
We will describe the $d$-dimensional hyperbolic space $\HH^d(\ka)=(\RR^d,g_\ka)$ of sectional curvature $-\ka^2<0$ via geodesic normal coordinates centered at a fixed but arbitrary point $p_0\in\HH^d(\ka)$. That is, we take coordinates $\rho=\rho_M(x,p_0):=\dist_\ka(x,p_0)$, the geodesic distance to $p_0$, and the angular variable $\omega\in\SS^{d-1}$. In these coordinates the metric takes the form $g_\ka=d\rho^2+\frac{\sinh^2(\ka\rho)}{\ka^2}g_{\SS^{d-1}}$. For simplicity, in Sections \ref{S.eigenfunctions}--\ref{S.Coul} we will instead use the coordinates $(r,\om)$ with \begin{equation*}r:=\frac{\sinh(\ka\rho)}{\ka}\,.\end{equation*}
In these coordinates, the metric and the Laplacian read as
\begin{align*}
g_\ka&=\frac{1}{1+\ka^2r^2}dr^2+r^2g_{\SS^{d-1}}\,,\\
\De_\ka &=(1+\ka^2r^2)\frac{\partial^2 }{\partial r^2}+\frac{d-1+d\ka^2r^2}{r}\frac{\partial }{\partial r}+r^{-2}\De_{\SS^{d-1}}\,.
\end{align*}

We are interested in the Coulomb and harmonic oscillator potentials on~$\HH^d(\ka)$, which are given respectively by
\begin{align*}
	V_\ka\coul&:= -\sqrt{\ka^2+r^{-2}}\,,\\
	 V_\ka\harm&:=\frac{r^2}{1+\ka^2r^2}\,.
\end{align*}
The point $p_0$ is chosen to be the global minimum point of the harmonic potential and the singular point of the Coulomb potential.
Note that when $d=3$, $\Delta_\ka V_\ka\coul=c_{d,\ka} \delta_{p_0}$, where $c_{d,\ka}$ is an explicit constant, so $V_\ka\coul$ on $\HH^3(\ka)$ has a direct interpretation as the electrostatic potential of a point charge located at $p_0$. Also, the harmonic potential is defined as $V_\ka\harm:=\left(V_\ka\coul\right)^{-2}$, which mimics the well known functional relation between the harmonic potential $|x|^2$ and the Coulomb potential $|x|^{-1}$ in Euclidean space.

The eigenvalues and eigenfunctions of the Coulomb operator $H_\ka\coul$ are well known (see e.g.~\cite{Ques}). An orthogonal basis of the space of square-integrable eigenfunctions is
\begin{equation*}
	\begin{aligned}
		\psi^{\kappa,\mathrm{C}}_{nlm}&:= f^{\kappa,\mathrm{C}}_{nl}(r)Y_{lm}(\omega)\,,\\
		f^{\kappa,\mathrm{C}}_{nl}(r)&=\ r^{l}\left(\sqrt{1+\ka^2r^2}+\ka r\right)^{c} P_{n}^{(a;b)}\left(2\left(\sqrt{1+\ka^2r^2}+\ka r\right)^2-1\right)\,.		 
	\end{aligned}
\end{equation*}
The constants are defined in terms of $n$ and~$l$ as
\begin{equation*}
		c:=-n-\frac{\al}{2\ka(n+l+\frac{d-1}{2})};\ a:=2l+d-2;\ b:=-n-l-\frac{d-1}{2}-\frac{\al}{2\ka(n+l+\frac{d-1}{2})}\,,
		\end{equation*}	
and hereafter $Y_{lm}$, with $1\leq m\leq d_l:={l+d-2\choose l}\frac{2l+d-2}{l+d-2}$, denotes any orthonormal basis of the space $(d-1)$-dimensional spherical harmonics with spherical eigenvalue $\mu_l=l(l+d-2)$, for any nonnegative integer~$l$. Also, $P_{n}^{(a;b)}$ is a Jacobi polynomial. The indices $n,l$ range over the set of nonnegative indices such that
\begin{equation}\label{rCE}
n+l<\sqrt{\frac{\al}{2\ka}}-\frac{d-1}{2},
\end{equation}and $1\leq m\leq d_l$. In particular, the point spectrum is finite, contrary to what happens in the Euclidean case of the Euclidean Coulomb operator.

The eigenvalue of $\psi^{\kappa,\mathrm{C}}_{nlm}$ is
\begin{equation*}\lambda_{nl}^{\kappa,\mathrm{C}}=-\frac{\al^2}{4(n+l+\frac{d-1}{2})^2}-\ka^2\left(n+l+\frac{d-1}{2}\right)^2\,.\end{equation*}
Just as in the Euclidean counterpart, the energy only depends on $N:=n+l$. Thus the multiplicity of $\lambda_{nl}$ is $\sum_{j=0}^{N}d_j$, just as in the Euclidean case.

In the case of the harmonic oscillator $H_\ka\harm$, a basis of the space of bound states is
\begin{equation*}
	\begin{aligned}
		\psi^{\ka,\mathrm{H}}_{nlm}&:= f^{\ka,\mathrm{H}}_{nl}(r)Y_{lm}(\omega)\,,\\
		f^{\ka,\mathrm{H}}_{nl}(r)&:=\ r^{l}(1+\ka^2r^2)^{\frac{c}{2}} P_{n}^{(a;b)}\left(1+2\ka^2r^2\right)\,,
		\end{aligned}
		\end{equation*}
with
\begin{equation*}
c:=-\be/\ka^2;\ a:=l+\frac{d-2}{2};\ b:=-\be/\ka^2-1/2\,.
\end{equation*}
Here the nonnegative integers range over the set
\begin{equation}\label{rHE}
		2n+l<\frac{\be}{\ka^2}-\frac{d-1}{2},\end{equation}and $1\leq m\leq d_l$. Thus the space of bound states is again finite-dimensional.
The eigenvalue of		$\psi^{\ka,\mathrm{H}}_{nlm}$ is
		\begin{equation*}
\lambda_{nl}^{\ka,\mathrm{H}}=\be(4n+2l+d)-\ka^2\left(2n+l+\frac{d-1}{2}\right)^2\,,
\end{equation*}
where $\be:=\frac{\sqrt{\ka^4+4\alpha}-\ka^2}{2}$. This depends only on $N:=2n+l$, so the multiplicity of $\lambda_{nl}^{\ka,\mathrm{H}}$ is then as in the Euclidean case as well:
\begin{equation*}
\sum_{k=0}^{\left\lfloor\frac{N}{2}\right\rfloor} d_{N-2 k}\,.
\end{equation*}

It is worth recalling that, in Euclidean space, an orthogonal basis of eigenfunctions of the space of bound states of the Coulomb and harmonic oscillator operators,
\begin{align*}
H^{\mathrm{H}}&:=-\De +\al|x|^2\,,\\
H^{\mathrm{C}}&:=-\De -\al|x|^{-1}\,,\\
\end{align*}
is respectively given by
\begin{align*}
	\psi\harm_{nlm}&:=r^l e^{-\sqrt{\alpha}r^2/2}L_n^{(l+\frac{d}{2}-1)}\left(\sqrt{\alpha}r^2\right)Y_{lm}(\omega)\,,\\
	\psi\coul_{nlm}&:=r^l e^{-\al r/2\left(n+l+\frac{d-1}{2}\right)}L_n^{(2l+d-2)}\left(\frac{\alpha r}{n+l+\frac{d-1}{2}}\right)Y_{lm}(\omega)\,.
\end{align*}
Here $n,l\geq0$ and $1\leq m\leq d_l$. The corresponding eigenvalues are
\begin{align*}
	\lambda_{nl}\harm=\sqrt{\al}(4n+2l+d)\,,\qquad \lambda_{nl}\coul:=-\frac{\al^2}{4\left(n+l+\frac{d-1}{2}\right)^2}\,.
\end{align*}

\section{Proof of Theorem~\ref{T1}}
\label{S.T1}
Assume that $v$ is an even function, i.e., $v(x)=v(-x)$ for all $x\in\RR^d$. The case when $v$ is odd  is  analogous and will be sketched at the end of this section.

Let us begin by  taking hyperspherical coordinates in a ball $B'\supset\!\supset B$. For any fixed~$r$, one can expand~$v$ in hyperspherical harmonics to write
\begin{equation*}
v= \sum_{l\geq0}\sum_{m=1}^{d_l} v_{lm}(r)Y_{lm}(\om)\,,
\end{equation*}
with
\begin{equation*}
v_{lm}(r)=\int_{\SS^{d-1}}v(r,\om)\, Y_{lm}(\om)\, d\si(\om)\,.
\end{equation*}
Furthermore, since
\begin{equation*}
\De_{\SS^{d-1}}Y_{lm}=-\mu_l Y_{lm}= -l(l+d-2) Y_{lm}\,,
\end{equation*}
one can integrate the Helmholtz equation~\eqref{Helm}
\begin{equation*}
- v
= \De v= \pd_{rr}v +\frac{d-1}r\pd_r v +\frac{\De_{\SS^{d-1}}v}{r^2}\end{equation*}
with $Y_{lm}$ over $\SS^{d-1}$ to obtain
\begin{align*}
- v_{lm}	&= \int_{\SS^{d-1}} Y_{lm}\,\De v= \left(\pd_{rr} +\frac{d-1}r\pd_r \right)\int_{\SS^{d-1}}vY_{lm}+\frac{1}{r^2}\int_{\SS^{d-1}}v\De_{\SS^{d-1}}Y_{lm}\\
&= \bigg(\pd_{rr} +\frac{d-1}r\pd_r -\frac{l(l+d-2)}{r^2}\bigg)v_{lm}\,.
\end{align*}
Note that we have integrated by parts the term involving the spherical Laplacian.

Thus we infer that $v_{lm}$ satisfies a Bessel-type ODE that only depends on~$l$. Since $v$ is smooth, $v_{lm}$ must be well behaved at $r=0$, which ensures that there are real constants $c_{lm}$ such that
\begin{equation*}
v_{lm}(r)= c_{lm}J_{l+\frac{d-2}{2}}(r)r^{1-d/2}\,.
\end{equation*}
Furthermore, $c_{lm}=0$ for all odd $l$ because $v$ is assumed to be even, so the integral defining $v_{lm}(r)$ is zero.

Since $v$ is smooth, the above series converges to~$v$ e.g.\ in $L^2(B')$. Therefore, for any $\delta>0$ there is an integer $l_0$ such that the finite sum \begin{equation*}w:=\sum_{l=0}^{l_0}\sum_{m=1}^{d_l}c_{lm}J_{l+\frac{d-2}{2}}(r)r^{1-d/2}Y_{lm}(\omega)\end{equation*}
approximates the function $v$ as
\begin{equation*}
\|v-w\|_{L^2(B')}<\delta\,.
\end{equation*}
As the difference $v-w$ also satisfies the Helmholtz equation, standard elliptic estimates show that the approximation also holds
in the $C^k$ sense:
 \begin{equation}\label{1}\|v-w\|_{C^k(B)}<C\delta\,.\end{equation}

To connect these expressions with eigenfunctions of $H_\ka\harm$, let us start with the following lemma:
\begin{lemma}\label{lema1}
Let us fix some integers $l$ and $n$ as in (\ref{rHE}). Uniformly for $r\leq R$, the eigenfunction $\psi^{\ka,\mathrm{H}}_{nlm}$ admits the asymptotic expansion, as $n\rightarrow\infty$ and $\ka\rightarrow0$,
\begin{equation*}
\psi^{\ka,\mathrm{H}}_{nlm}\left(\frac{r}{\sqrt{\lambda_{nl}\harm}},\omega\right)=A_{nl}\left[J_{l+\frac{d-1}{2}}\left( r\right)\cdot r^\frac{2-d}{2}+\mathcal{O}\left(n^{-\frac{d+1}{4}}\right)+\mathcal{O}(\ka^2)\right]Y_{lm}(\omega)\,.
\end{equation*}
In fact,
\begin{equation}\label{app2}
\lim_{n \rightarrow \infty,\ka\rightarrow0}\left\|\psi^{\kappa,\mathrm{H}}_{nlm}\left(\frac{r}{\sqrt{\lambda_{nl}\harm}},\omega\right)-A_{nl}J_{l+\frac{d-1}{2}}\left( r\right)\cdot r^\frac{2-d}{2}Y_{lm}(\omega)\right\|_{C^k(B_R)}=0\,.
\end{equation}
Throughout, we assume $2n+l<\frac{\be}{\ka^2}-\frac{d-1}{2}$. Here $A_{nl}$ is a non-zero constant.
\end{lemma}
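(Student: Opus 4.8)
The whole statement reduces to the radial factor $f^{\ka,\mathrm H}_{nl}$, and the natural tool is the terminating hypergeometric representation of the Jacobi polynomial,
\[
P_n^{(a,b)}(1+2\ka^2 s^2)=\binom{n+a}{n}\,{}_2F_1\!\left(-n,\,n+a+b+1;\,a+1;\,-\ka^2 s^2\right),
\]
evaluated at the rescaled radius $s=r/\sqrt{\la_{nl}\harm}$. The point is that two different mechanisms are encoded in this single polynomial: the limit $\ka\searrow0$ flattens $\HH^d(\ka)$ to $\RR^d$ and should turn the Jacobi polynomial into the Laguerre polynomial of the Euclidean oscillator, while the high-energy limit $n\to\infty$ together with the rescaling by $\sqrt{\la_{nl}\harm}$ should turn that Laguerre polynomial into a Bessel function through Mehler--Heine asymptotics. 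Rather than iterating the two limits, I would run them simultaneously directly on the series.

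Concretely, writing $\nu:=\be/\ka^2$ and $\mu:=\nu-n-l-\tfrac{d-1}{2}$ (so $n+a+b+1=-\mu$, with $\mu>n$ by the constraint~\eqref{rHE}), the $m$-th term of the series is $\frac{(-n)_m(-\mu)_m}{(a+1)_m\,m!}(-\ka^2 s^2)^m$ with $\ka^2 s^2=\ka^2 r^2/\la_{nl}\harm$. Using $|(-n)_m|\le n^m$, $|(-\mu)_m|\le\mu^m$ and the algebraic identity — which I would verify by substituting the explicit $\la_{nl}\harm$ and $\nu\ka^2=\be$, noting numerator and denominator share the factor $\be-n\ka^2\ge\be/2$ (again by~\eqref{rHE}) —
\[
\frac{n\,\mu\,\ka^2}{\la_{nl}\harm}\ \longrightarrow\ \tfrac14
\]
in the joint limit (both in the regime $\ka^2\sim 1/n$ and in $\ka^2\ll1/n$), each term is dominated by $(R^2/4+\ep)^m/[(a+1)_m\,m!]$, which is summable. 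Dominated convergence then gives, since $(-n)_m\sim(-1)^m n^m$ and $(-\mu)_m\sim(-1)^m\mu^m$,
\[
\sum_{m\ge0}\frac{(-1)^m}{(a+1)_m\,m!}\Big(\frac{r^2}{4}\Big)^m=\Gamma(a+1)\,2^{a}\,r^{-a}J_{a}(r),\qquad a=l+\tfrac{d-2}{2}.
\]
Combined with $(1+\ka^2 s^2)^{c/2}\to1$ and $s^l=(\la_{nl}\harm)^{-l/2}r^l$, this identifies the limit as $A_{nl}\,r^{(2-d)/2}J_{a}(r)$ with the explicit nonzero constant $A_{nl}=(\la_{nl}\harm)^{-l/2}\binom{n+a}{n}\Gamma(a+1)2^{a}$, and yields~\eqref{app2} in $C^0(B_R)$. (I note that the Bessel index produced this way is $l+\tfrac{d-2}{2}$, matching the regular solutions $r^{1-d/2}J_{l+(d-2)/2}(r)$ of the separated Helmholtz equation used just before the lemma.)

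To upgrade $C^0$ convergence to $C^k$ I would argue exactly as in the passage preceding the lemma: after rescaling, $\Psi:=\psi^{\ka,\mathrm H}_{nlm}(\cdot/\sqrt{\la_{nl}\harm})$ solves $\Delta_{\tilde g}\Psi+(1-\al V_\ka\harm/\la_{nl}\harm)\Psi=0$ on $B_R$, where $\tilde g=(1+\ka^2 r^2/\la_{nl}\harm)^{-1}dr^2+r^2 g_{\SS^{d-1}}$ converges to the Euclidean metric and the potential term is $\mathcal O(1/n)$. The target $r^{(2-d)/2}J_a(r)Y_{lm}$ solves $\Delta v+v=0$ exactly, so the difference satisfies an elliptic equation with small right-hand side; uniform $C^m$ bounds on $\Psi$ — obtained either from the same dominated-convergence estimate applied to the differentiated series or by interior elliptic regularity from the uniform $C^0$ bound — then give the $C^k(B_R)$ statement.

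The main obstacle is extracting the two \emph{quantitative} rates $\mathcal O(n^{-(d+1)/4})$ and $\mathcal O(\ka^2)$ rather than a mere $o(1)$, and keeping them separate. The dominated-convergence argument proves~\eqref{app2} only qualitatively; the $\ka$-error comes cleanly from the Pochhammer ratios $(-\mu)_m/\mu^m=1+\mathcal O(\ka^2 m^2)$, since $1/\mu=\ka^2/(\be-(n+l+\tfrac{d-1}2)\ka^2)=\mathcal O(\ka^2)$, whereas the corrections $(-n)_m/n^m=1+\mathcal O(m^2/n)$, the deviation of $n\mu\ka^2/\la_{nl}\harm$ from $\tfrac14$, and the prefactor $(1+\ka^2 s^2)^{c/2}=1-\tfrac{\be r^2}{2\la_{nl}\harm}+\cdots$ all feed the $n$-error. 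The delicate point is twofold: when $\ka^2\sim 1/n$ the two small parameters are comparable, so isolating a genuine $\mathcal O(\ka^2)$ contribution requires care; and the elementary estimates above only produce a crude $\mathcal O(1/n)$ for the $n$-error, so obtaining the sharp dimensional exponent $(d+1)/4$ (which is finer than $1/n$ once $d\ge4$) requires the refined Hilb-type form of the Mehler--Heine asymptotics near the turning point. This is where I would concentrate the technical effort.
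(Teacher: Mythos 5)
Your route is genuinely different from the paper's. The paper argues in two quantitative steps, both taken from Szeg\H{o}: first the Jacobi-to-Laguerre limit \cite[(5.3.4)]{Szego}, applied with $\gamma=-\be/\ka^2-1/2$, converts $f^{\ka,\mathrm{H}}_{nl}$ into the Euclidean oscillator radial function $r^l e^{-\sqrt{\al}r^2/2}L_n^{(l+d/2-1)}(\sqrt{\al}r^2)$ up to $\mathcal{O}(\ka^2)$; then Hilb's formula \cite[(8.22.4)]{Szego} turns the rescaled Laguerre function into $A_{nl}r^{1-d/2}J_{l+d/2-1}(r)$ with error $\mathcal{O}\big(n^{l/2+d/4-5/4}\big)$, which after division by $A_{nl}\sim n^{l/2+d/2-1}$ is exactly the rate $\mathcal{O}\big(n^{-(d+1)/4}\big)$. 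Your single-step dominated-convergence argument on the terminating ${}_2F_1$ series is a clean alternative proof of the qualitative statement~\eqref{app2}: the domination $|(-n)_m(-\mu)_m|\le n^m\mu^m$ is legitimate precisely because the sum terminates at $m=n<\mu$, the limiting series is the Bessel series, your constant $A_{nl}$ agrees with the paper's, and your observation that the correct Bessel index is $l+\tfrac{d}{2}-1$ rather than $l+\tfrac{d-1}{2}$ (a misprint in the lemma, confirmed by the paper's own proof and by the expansion of $w$ preceding it) is correct. Since the proof of Theorem~\ref{T1} only uses the lemma qualitatively (first choose $\widehat{n}$ large, then $\ka$ small), your argument would suffice for the application, and your elliptic-regularity upgrade to $C^k$ is a workable substitute for the paper's appeal to derivative formulas for orthogonal polynomials.

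There remain two genuine gaps. First, as you acknowledge, your argument does not prove the first display of the lemma: the elementary Pochhammer estimates yield at best an $\mathcal{O}(1/n)$ error, which is strictly weaker than $\mathcal{O}\big(n^{-(d+1)/4}\big)$ once $d\ge4$; the paper gets the sharp exponent directly from the Hilb-type formula you defer to, so that half of the lemma is left unproven in your proposal. Second, your parenthetical claim that $n\mu\ka^2/\la_{nl}\harm\to\tfrac14$ also in the regime $\ka^2\sim 1/n$ is false for the Euclidean eigenvalue $\la_{nl}\harm=\sqrt{\al}(4n+2l+d)$, which is the normalization appearing in the statement: if $n\ka^2\to c>0$ (allowed by~\eqref{rHE} for $c<\sqrt{\al}/2$), then $n\mu\ka^2/\la_{nl}\harm\to(\sqrt{\al}-c)/(4\sqrt{\al})$, and the rescaled eigenfunction converges to a \emph{dilated} Bessel profile. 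The shared factor $\be-n\ka^2$ you invoke belongs to the hyperbolic eigenvalue $\la^{\ka,\mathrm{H}}_{nl}=\be(4n+2l+d)-\ka^2\big(2n+l+\tfrac{d-1}{2}\big)^2$, for which your ratio claim is true; the two eigenvalues are interchangeable only when $n\ka^2\to0$. To be fair, the paper's own $\mathcal{O}(\ka^2)$ bounds carry $n$-dependent constants and therefore implicitly suffer the same restriction, and only the iterated limit is ever used; but if you advertise a genuinely joint limit, you must either normalize by $\la^{\ka,\mathrm{H}}_{nl}$ or restrict to $n\ka^2\to0$.
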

\begin{proof}
The result is proved in two steps. In the first part of the proof, we use the asymptotic relation between Laguerre and Jacobi polynomials that can be found, for example, in \cite[(5.3.4.)]{Szego}:\begin{equation*}
	L_{n}^{(\alpha)}(r)=\lim _{\gamma \rightarrow \infty} P_{n}^{(\alpha ; \gamma)}\left(1-2 \gamma^{-1} r\right)\,.
\end{equation*}
It is well-known that one can rewrite Jacobi and Laguerre polynomials as hypergeometric functions and confluent hypergeometric functions respectively:
\begin{equation*}
	\begin{array}{c}
		P_{n}^{(\alpha ; \gamma)}(z)=\left(\begin{array}{c}
			n+\alpha \\
			n
		\end{array}\right){ }_{2} F_{1}\left(-n, 1+\alpha+\gamma+n ; \alpha+1 ; \frac{1}{2}(1-z)\right)\,, \\
		L_{n}^{(\alpha)}(z):=\left(\begin{array}{c}
			n+\alpha \\
			n
		\end{array}\right) M(-n, \alpha+1, z)={ }_{1} F_{1}(-n, \alpha+1, z) \,.
	\end{array}
\end{equation*}
If we pay attention to the fact that the first argument in both of them is a non-positive integer, we can conclude that the series defining hypergeometric functions are just finite sums and then we can bound the error:
\begin{multline*}
	\left|P_{n}^{(\alpha ; \gamma)}\left(1-\frac{2r}{\gamma}\right)-	L_{n}^{(\alpha)}(r)\right|\\
	=\left(\begin{array}{c}
		n+\alpha \\
		n
	\end{array}\right)\left|{ }_{2} F_{1}\left(-n, 1+\alpha+\gamma+n ; \alpha+1 ; \frac{r}{\gamma}\right)- M(-n, \alpha+1, r)\right|\\ =\left(\begin{array}{c}
	n+\alpha \\
	n
\end{array}\right)\left|\sum_{k=0}^{n+1}\frac{(-n)_kr^k}{(\al+1)_kk!}\left(\frac{(1+\al+\gamma+n)_k}{\gamma^k}-1\right)\right|\\=
\left(\begin{array}{c}
n+\alpha \\
n
\end{array}\right)\left|\frac{(-n)r(1+\al+n)}{(\al+1)\gamma}+\frac{(-n)(1-n)r^2((1+\al+n+\ga)(2+\al+n+\ga)-\ga^2)}{2(\al+1)(\al+2)\ga^2}\right.\\
\left.+\cdots+\frac{(-1)r^n((1+\al+n+\ga)\cdots(2n+\al+\ga)-\ga^n)}{(\al+1)\cdots(\al+n)n!\ga^n}\right|=\mathcal{O}\left(\ga^{-1}\right),
\end{multline*}
uniformly for $0\leq r\leq R$ as $\ga\rightarrow\infty$. Here $(q)_{n}$ is the (rising) Pochhammer symbol:
$$
(q)_{n}:=\left\{\begin{array}{ll}
1\,, & n=0 \\
q(q+1) \cdots(q+n-1)\,, & n>0
\end{array}\right.
$$ From this, it follows that \begin{equation*}
	P_{n}^{(\alpha ; \gamma)}\left(1-2 \gamma^{-1} r\right)=L_{n}^{(\alpha)}(r)+\mathcal{O}\left(\gamma^{-1}\right),
\end{equation*} uniformly in $[0,R]$.

If we take $\gamma:=-\beta/\ka^2-1/2$ and notice that $\mathcal{O}(\gamma^{-1})=\mathcal{O}(\ka^2)$, we can conclude \begin{multline*}
		 P_{n}^{\left(l+d/2-1 ;-\beta\kappa^{-2}-1 / 2\right)}\left(1+2 \kappa^{2} r^{2}\right)
		= P_{n}^{(l+d / 2-1 ; \gamma)}\left(1-2 r^{2} \frac{\beta}{(1 / 2+\gamma)}\right) \\
		= P_{n}^{(l+d / 2-1 ; \gamma)}\left(1-2\beta r^{2} \gamma^{-1}+\mathcal{O}\left(\gamma^{-2}\right)\right) = L_{n}^{(l+d / 2-1)}\left(\beta r^{2}\right)+\mathcal{O}\left(\kappa^{2}\right)\\
		= L_{n}^{(l+d / 2-1)}\left(\sqrt{\al} r^{2}+\mathcal{O}(\ka^2)\right)+\mathcal{O}\left(\kappa^{2}\right)= L_{n}^{(l+d / 2-1)}\left(\sqrt{\al} r^{2}\right)+\mathcal{O}\left(\kappa^{2}\right).
	\end{multline*}
	Here we have used that \begin{equation*}\be=\frac{\sqrt{\ka^4+4\alpha}-\ka^2}{2}=\sqrt{\al}+\mathcal{O}(\ka^2)\,.\end{equation*}On the other hand, some elementary computations show that \begin{equation*}\left(\sqrt{1+\kappa^{2} r^{2}}\right)^{-\beta\kappa^{-2}}=e^{-\sqrt{\al}r^{2} / 2+\mathcal{O}(\ka^2)}+\mathcal{O}\left(\kappa^{2}\right)=e^{-\sqrt{\al}r^{2} / 2}+\mathcal{O}(\ka^2)\,,\end{equation*}uniformly on $[0,R]$, since $e^{-r^2\sqrt{\al}/2}>\mathcal{O}(\ka^2)$ for sufficiently small~$\ka$.

Standard formulas for the derivatives of orthogonal polynomials (see e.g. \cite[(4.21.7)]{Szego} and
\cite[(22.8.6)]{Hand}) yield similar approximation results for derivatives of arbitrary order, so we conclude that  \begin{equation*}\left\|f^{\ka,\mathrm{H}}_{nl}(r)-r^l e^{-\sqrt{\alpha}r^2/2}L_n^{\left(l+\frac{d}{2}-1\right)}\left(\sqrt{\alpha}r^2\right)\right\|_{C^k(0,R)}=\mathcal{O}(\ka^2)\,.\end{equation*}
By noticing that \begin{equation*}\lambda^{\ka,\mathrm{H}}_{nl}=\lambda^{\mathrm{H}}_{nl}+\mathcal{O}(\ka^2),\end{equation*}we can see that \begin{equation*}\left\|f^{\ka,\mathrm{H}}_{nl}\left(r/\sqrt{\lambda^{\ka,\mathrm{H}}_{nl}}\right)-\left(\frac{r}{\sqrt{\lambda^{\mathrm{H}}_{nl}}}\right)^l e^{-\sqrt{\alpha}r^2/\left(2\lambda^{\mathrm{H}}_{nl}\right)}L_n^{\left(l+\frac{d}{2}-1\right)}\left(\sqrt{\alpha}\frac{r^2}{\lambda^{\mathrm{H}}_{nl}}\right)\right\|_{C^k(0,R)}=\mathcal{O}(\ka^2)\,.
\end{equation*}

Next we use Hilb's asymptotic formula for the Laguerre polynomial \cite[(8.22.4)]{Szego}, which gives an asymptotic expansion for a fixed $\theta>0$: \begin{equation*}
	e^{-x / 2} x^{\theta / 2} L_{n}^{(\theta)}(x)=\frac{\Gamma(n+\theta+1)}{\left(n+\frac{\theta+1}{2}\right)^{\theta / 2} n !} J_{\theta}\left(\sqrt{(4 n+2 \theta+2) x}\right)+x^{5 / 4} \mathcal{O}\left(n^{\frac{2 \theta-3}{4}}\right)\,.
\end{equation*}
The bound holds uniformly in $0\leq x\leq R$ for any fixed $R>0$. Using the substitutions $x=\sqrt{\al}r^2(\lambda_{nl}^{\text{H}})^{-1}$ and $\theta=l+\frac{d}{2}-1$, we obtain the following asymptotic expansion of the radial part of $\psi\harm_{nlm}$:
\begin{equation*}\Bigg(\frac{r}{\sqrt{\lambda_{nl}\harm}}\Bigg)^le^{-\frac{\sqrt{\al}r^2}{2\lambda_{nl}\harm}}L_n^{(l+d/2-1)}\left(\frac{\sqrt{\al}r^2}{\lambda_{nl}\harm}\right)
=A_{nl}r^{1-d/2}J_{l+d/2-1}(r)+\mathcal{O}\left(n^{l/2+d/4-5/4}\right)\,.\end{equation*}
Here \begin{equation*}A_{nl}:=\left(\frac{\sqrt{\lambda_{nl}\harm}}{2}\right)^{-l}\frac{\Gamma(n+l+d/2)2^{d/2-1}}{n!}\,.\end{equation*}
It is standard that this asymptotic formula can be derived term by term, so one obtains the formula for the derivatives~\eqref{app2} that appears in the lemma and similar ones for higher derivatives.

Using Stirling's asymptotic formula for the factorial, \begin{equation*}
n!=\sqrt{2\pi n}\left(\frac{n}{e}\right)^n\left(1+\mathcal{O}\left(\frac{1}{n}\right)\right)\,,
\end{equation*}
and the identity
\begin{equation}\label{duplication}
	\Gamma(n+l+d/ 2)=\frac{\sqrt{\pi}(2 n+2 l+d-1) !}{2^{2 n+2 l+d-2}(n+l+\frac{d-1}{2}) !}\,,
\end{equation}
we can estimate the constant $A_{nl}$ for large $n$ as\begin{equation*}A_{nl}=\frac{\al^{-l/4}2^{d/2-1}}{e^{l+d/2-1}}n^{l/2+d/2-1}+\mathcal{O}(n^{l/2+d/2-2})\,,\end{equation*}if $d$ is even and \begin{equation*}A_{nl}=\frac{\al^{-l/4}2^{d/2-1}}{e^{l+d/2-1/2}}n^{l/2+d/2-1}+\mathcal{O}(n^{l/2+d/2-2})\,,\end{equation*}if $d$ is odd. Thus we conclude that $n^{l/2+d/4-5/4}A_{nl}^{-1}=\mathcal{O}\left(n^{-\frac{d+1}{4}}\right)$. The lemma then follows by combining the above identities together.
\end{proof}

To continue, let us take a large integer $\widehat{n}$ that will be fixed later, and which we assume to be much larger than $l_0/2$. For each even integer $l$ smaller than $2\widehat{n}$ we set\begin{equation*}\widehat{n}_l:=\widehat{n}-l/2,\end{equation*} so that the eigenvalue \begin{equation}\label{lamb}\lambda:=\lambda_{\widehat{n}_ll}^{\ka,\mathrm{H}}=\be(4\widehat{n}_l+2l+d)-\ka^2\left(2\widehat{n}_l+l+\frac{d-1}{2}\right)^2=
\be(4\widehat{n}+d)-\ka^2\left(2\widehat{n}+\frac{d-1}{2}\right)^2\end{equation} does not depend on the choice of $l$. We can now derive an eigenfunction of the hyperbolic harmonic oscillator from the function $w$ by setting
\begin{equation*}\psi:=\sum_{l=0}^{l_0}\sum_{m=1}^{d_l}c_{lm}A_{\widehat{n}_ll}^{-1}\psi^{\ka,\mathrm{H}}_{\widehat{n}_llm}\,.\end{equation*}
To make sure that this is indeed an eigenfunction, we need to ensure that all the numbers $\widehat n_l,l,m$ are in the admissible range of the integers, so we pick the curvature of the hyperbolic space small enough so that
\begin{equation*}\ka^2<\frac{\beta}{2\widehat{n}+l_0+\frac{d-1}{2}}\,.\end{equation*}
By construction, $\psi$ is then an eigenfunction of the hyperbolic harmonic oscillator with energy as in (\ref{lamb}). Here we have used the fact that $c_{lm}=0$ for odd $l$, since the defined number $\widehat{n}_l$ is an integer only for even~$l$. We are now in conditions to use Lemma \ref{lema1}.

We claim that for any $\delta>0$ one can choose $\widehat{n}$ large enough so that \begin{equation}\label{2}\left\|\psi\left(\cdot/\sqrt{\lambda}\right)-w\right\|_{C^k(B)}<\delta\,.\end{equation} This is a rather straightforward consequence of Lemma \ref{lema1}. Indeed, substituting the asymptotic expressions obtained in the sum for $\psi$ we find \begin{equation*}
	\begin{aligned}
	\left|\psi\left(\frac{x}{\sqrt{\lambda}}\right)-w(x)\right|&\leq \sum_{l=0}^{l_0}\sum_{m=1}^{d_l}|c_{lm}|\left|A_{\widehat{n}_ll}^{-1}\psi^{\ka,\mathrm{H}}_{\widehat{n}_llm}\left(\frac{x}{\sqrt{\lambda}}\right)-J_{l+d/2-1}(r)r^{1-d/2}Y_{lm}(\om)\right|\\
		 &=\sum_{l=0}^{l_0}\sum_{m=1}^{d_l}|c_{lm}|\left(\mathcal{O}\left(\widehat{n}_l^{-\frac{d+1}{4}}\right)+\mathcal{O}(\ka^2)\right)\leq\frac{C}{\widehat{n}^{3/4}}\,,
	\end{aligned}
\end{equation*}
provided that $\widehat{n}$ is much larger than $l_0/2$, $\ka\ll1$ and $|x|<R$, $R$ being the diameter of $B$. An analogous argument shows similar bounds for the derivatives of $\psi$ and $w$ so the estimate~\eqref{2} follows provided $\widehat{n}$ is large enough.

To conclude, we combine~\eqref{1}, \eqref{2} and the fact that we are working in normal geodesic coordinates on a hyperbolic space to show that
\begin{align*}
\left\|v(\cdot)-\psi\circ \exp_{p_0}^\ka\left(\cdot/\sqrt{\lambda}\right)\right\|_{C^k(B)}&\leq\|v-w\|_{C^k(B)}+\left\|\psi\circ \exp_{p_0}^\ka\left(\cdot/\sqrt{\lambda}\right)-w(\cdot)\right\|_{C^k(B)}\\
&<\ep,
\end{align*}provided that $\ka$ is small enough. Here $p_0$ is the point where the harmonic oscillator potential attains its global minimum.

A straightforward modification of the argument enables us to consider the case where $v$ is odd. If $v$ is odd, we only need to notice that $c_{lm}=0$ for all even $l$ and define $\widehat{n}_l$ as $\widehat{n}_l=\widehat{n}-\frac{l+1}{2}$ to prove the result.

\section{The Coulomb operator on hyperbolic spaces}
\label{S.Coul}
The localization result for the Coulomb operator of $\HH^d(\ka)$ analogous to Theorem~\ref{T1} is the following:
\begin{theorem}\label{T.Coul}
	Let $v$ satisfy the equation\begin{equation*}
		\Delta v+\frac{\alpha}{|x|}v=0
	\end{equation*}in $\RR^d$ with $d\geq 2$. Given $\ep>0$ small, an integer $k$ and a ball $B\subset\RR^d$ whose closure does not contain the origin, there exist some $\ka>0$ and an eigenfunction $\psi$ of $H\coul_\ka$ such that	\begin{equation*}
\|\psi\circ \exp_{p_0}^\ka-v\|_{C^k(B)}<\ep\,.
\end{equation*}
Here $p_0\in \HH^d(\ka)$ is the singularity of the Coulomb potential.
\end{theorem}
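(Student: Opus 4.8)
The plan is to run the same three-step scheme as in the proof of Theorem~\ref{T1}, replacing the Helmholtz--Bessel expansion by the one adapted to the zero-energy Coulomb equation $\De v+\frac{\al}{|x|}v=0$ and replacing Lemma~\ref{lema1} by an analogous double-asymptotic statement for the hyperbolic Coulomb eigenfunctions $\psi^{\kappa,\mathrm{C}}_{nlm}$. Two features distinguish this case. First, the eigenvalues $\lambda^{\kappa,\mathrm{C}}_{nl}$ accumulate at $0$, the bottom of the essential spectrum, instead of diverging, so no rescaling by $\lambda^{-1/2}$ occurs: the eigenfunctions already live on an $\mathcal{O}(1)$ scale and converge, as $n\to\infty$ and $\ka\to0$, to bona fide solutions of $\De v+\frac{\al}{|x|}v=0$. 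Second, since $\lambda^{\kappa,\mathrm{C}}_{nl}$ depends only on $N:=n+l$ rather than on $2n+l$, fixing the energy forces $n=N-l$, an integer for \emph{every}~$l$; this is why, unlike in Theorem~\ref{T1}, no parity hypothesis on $v$ is needed.

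First I would separate variables. Expanding $v=\sum_{l\geq0}\sum_{m=1}^{d_l}v_{lm}(r)Y_{lm}(\om)$ on a ball $B'\supset\!\supset B$ whose closure still avoids the origin and integrating the equation against $Y_{lm}$ over $\SS^{d-1}$, as in Section~\ref{S.T1}, each radial profile solves
\[
\Big(\pd_{rr}+\frac{d-1}{r}\pd_r-\frac{l(l+d-2)}{r^2}+\frac{\al}{r}\Big)v_{lm}=0.
\]
Because $v$ solves the equation on all of $\RR^d$ it is regular across the origin, and the substitution $u=r^{(d-1)/2}v_{lm}$ turns this into a one-dimensional zero-energy Coulomb equation whose regular solution is $\sqrt{r}\,J_{2l+d-2}(2\sqrt{\al r})$; hence $v_{lm}(r)=c_{lm}\,r^{(2-d)/2}J_{2l+d-2}(2\sqrt{\al r})$. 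Truncating at some $l_0$ gives a finite sum $w$ with $\|v-w\|_{L^2(B')}<\de$, and since $v-w$ solves the same equation --- whose potential $\al/|x|$ is smooth on the compact set $\overline{B'}$ away from the origin --- interior elliptic estimates upgrade this to $\|v-w\|_{C^k(B)}<C\de$, exactly as in~\eqref{1}.

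The heart of the matter is the Coulomb analog of Lemma~\ref{lema1}: uniformly on $0<r\leq R$, as $n\to\infty$ and $\ka\to0$ subject to the admissibility bound~\eqref{rCE}, one should have
\[
\psi^{\kappa,\mathrm{C}}_{nlm}(r,\om)=A^{\mathrm{C}}_{nl}\Big[r^{(2-d)/2}J_{2l+d-2}(2\sqrt{\al r})+o(1)\Big]Y_{lm}(\om)
\]
for nonzero constants $A^{\mathrm{C}}_{nl}$, with term-by-term versions for derivatives. I would prove this in two stages. Writing $N':=n+l+\tfrac{d-1}{2}$ and $\ga:=b=-N'-\frac{\al}{2\ka N'}$, the first stage reduces $\psi^{\kappa,\mathrm{C}}_{nlm}$ to the Euclidean Coulomb eigenfunction $\psi\coul_{nlm}$: the Szeg\H{o} limit $L_n^{(a)}(\cdot)=\lim_{|\ga|\to\infty}P_n^{(a;\ga)}(1-2\ga^{-1}\,\cdot)$ applies because $2(\sqrt{1+\ka^2r^2}+\ka r)^2-1=1-2\ga^{-1}\tfrac{\al r}{N'}+\mathcal{O}(\ka^2)$ with $a=2l+d-2$, while $(\sqrt{1+\ka^2r^2}+\ka r)^{c}=e^{-n\ka r}\big(e^{-\al r/2N'}+\mathcal{O}(\ka^2)\big)$; here~\eqref{rCE} is used to guarantee $n\ka<\sqrt{\tfrac{\al\ka}{2}}\to0$, so that $e^{-n\ka r}\to1$ uniformly. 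The second stage applies the Mehler--Heine asymptotics $n^{-a}L_n^{(a)}(\al r/N')\to(\al r)^{-a/2}J_a(2\sqrt{\al r})$ from~\cite{Szego}, extracts the normalizing constant $A^{\mathrm{C}}_{nl}$, and uses Stirling's formula to check that the Laguerre remainder is $o(A^{\mathrm{C}}_{nl})$. Granting the lemma, the assembly is clean: fix $\widehat{N}\gg l_0$, put $\widehat{n}_l:=\widehat{N}-l$ so that $\lambda:=-\frac{\al^2}{4(\widehat{N}+\frac{d-1}{2})^2}-\ka^2(\widehat{N}+\frac{d-1}{2})^2$ is independent of $l$, take $\ka$ small enough that $\widehat{N}<\sqrt{\tfrac{\al}{2\ka}}-\tfrac{d-1}{2}$ to make every triple $(\widehat{n}_l,l,m)$ admissible, and set $\psi:=\sum_{l=0}^{l_0}\sum_{m=1}^{d_l}c_{lm}(A^{\mathrm{C}}_{\widehat{n}_l l})^{-1}\psi^{\kappa,\mathrm{C}}_{\widehat{n}_l l m}$, a genuine eigenfunction of $H\coul_\ka$ with eigenvalue $\lambda$. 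The lemma then yields $\|\psi-w\|_{C^k(B)}<\de$, and combining this with the truncation estimate and the fact that the geodesic-coordinate radius $r=\ka^{-1}\sinh(\ka|y|)=|y|+\mathcal{O}(\ka^2)$ on $B$ gives $\|\psi\circ\exp_{p_0}^\ka-v\|_{C^k(B)}<\ep$.

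I expect the main obstacle to be the lemma, and within it the verification that, after division by the large constant $A^{\mathrm{C}}_{nl}$ (evaluated via Stirling and the duplication identity~\eqref{duplication}), the Mehler--Heine remainder is $\mathcal{O}(n^{-\beta})$ for some $\beta>0$. This is what lets one kill the error by fixing $\widehat{N}$ large first and only then sending $\ka\to0$, at which point the $\mathcal{O}(\ka^2)$ Jacobi-to-Laguerre and prefactor errors --- and the factor $e^{-\widehat{n}_l\ka r}\to1$ --- become harmless. The hypothesis that $\overline{B}$ avoid the origin is what keeps the Coulomb potential $\al/|x|$ smooth on $\overline{B'}$, so that the elliptic estimate on $v-w$ in the first step is available in the first place.
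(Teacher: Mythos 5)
Your proposal is correct and follows essentially the same route as the paper's own proof: the same Bessel--spherical-harmonic expansion $c_{lm}r^{1-d/2}J_{2l+d-2}(2\sqrt{\al r})Y_{lm}$ with truncation and elliptic estimates, the same key lemma proved via the Szeg\H{o} Jacobi-to-Laguerre limit with $\gamma=-N'-\frac{\al}{2\ka N'}$ followed by Hilb/Mehler--Heine asymptotics and Stirling normalization, and the same assembly $\widehat{n}_l=\widehat{N}-l$ with the admissibility constraint on $\ka$ and the observation that no parity hypothesis is needed. The only (harmless) deviations are presentational: you isolate the factor $e^{-n\ka r}$ and control it via~\eqref{rCE}, where the paper absorbs it into a single Taylor expansion with error $\mathcal{O}(\ka)$, and you make explicit the coordinate comparison $r=|y|+\mathcal{O}(\ka^2)$ that the paper leaves implicit.
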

The proof of this result is very similar to the one of Theorem~\ref{T1}, so we will just sketch it and point out the differences. Arguing as before, we immediately see that~$v$ can be written in the ball $B$ as
\begin{equation*}v(r,\omega)=\sum_{l=0}^\infty\sum_{m=1}^{d_l}c_{lm}J_{2l+d-2}\left(\sqrt{4\alpha r}\right)r^{1-d/2}Y_{lm}(\omega)\,.\end{equation*}
 In the series above, $c_{lm}$ are real constants. We also conclude that for any $\delta>0$ there is an integer $l_0$ such that the truncated sum \begin{equation*}w:=\sum_{l=0}^{l_0}\sum_{m=1}^{d_l}c_{lm}J_{2l+d-2}\left(\sqrt{4\alpha r}\right)r^{1-d/2}Y_{lm}(\omega)\end{equation*}
approximates the function $v$ in $C^k$:  \begin{equation}\label{alpha}\|v-w\|_{C^k(B)}<C\delta\,.\end{equation}

The next lemma is the analog of Lemma \ref{lema1} and will be key in the proof. In the statement we take two constants $R_0<R$ such that $B$ is contained in $B_R\backslash \overline{B_{R_0}}$, where $B_R$ denotes the ball of (geodesic) radius $R$ centered at the origin.

\begin{lemma}\label{lema2}
	Let us fix some integers $l$ and $n$ as in (\ref{rCE}). Uniformly for $0<R_0\leq r\leq R$, the eigenfunction $	\psi^{\kappa,\mathrm{C}}_{nlm}$ admits the asymptotic expansion\begin{equation}\label{cota}
		\begin{aligned}
			\psi^{\kappa,\mathrm{C}}_{nlm}(r,\omega)&=A_{nl}\left[J_{2l+d-2}\left(\sqrt{4\alpha r}\right)r^{1-d/2}+\mathcal{O}\left(n^{-l-d/2+1/4}\right)+\mathcal{O}(\ka)\right]Y_{lm}(\omega)\,,
		\end{aligned}
	\end{equation}as $n\rightarrow\infty$ and $\ka\rightarrow 0$ with $n+l<\sqrt{\frac{\al}{2\ka}}-\frac{d-1}{2}$, where $A_{nl}$ is a nonzero constant. In fact, \begin{equation*}\lim_{n \rightarrow \infty,\ka\rightarrow0}\left\|\psi^{\kappa,\mathrm{C}}_{nlm}(r,\omega)-A_{nl}J_{2l+d-2}\left(\sqrt{4\alpha r}\right)r^{1-d/2}Y_{lm}(\omega)\right\|_{C^k(B)}=0\,.\end{equation*}
\end{lemma}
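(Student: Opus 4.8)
The plan is to transcribe the two-step argument of Lemma~\ref{lema1}, replacing the harmonic radial function by the Coulomb one. The first step degenerates the hyperbolic Coulomb eigenfunction to its Euclidean counterpart as $\ka\to0$, and the second degenerates the Euclidean Coulomb radial function to the Bessel profile $J_{2l+d-2}(\sqrt{4\al r})\,r^{1-d/2}$ as $n\to\infty$ through Hilb's asymptotic formula. A structural difference from the harmonic case is that the Coulomb eigenvalues accumulate at~$0$ rather than at $+\infty$, so there is no rescaling by $\lambda^{-1/2}$: the relevant scale is already encoded in the argument $\tfrac{\al r}{n+l+(d-1)/2}$ of the Laguerre polynomial, and the whole analysis is carried out at fixed $r$ in the annulus $R_0\le r\le R$.

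For the first step I would again use the limit relation $L_n^{(a)}(\rho)=\lim_{\gamma\to\infty}P_n^{(a;\gamma)}(1-2\gamma^{-1}\rho)$ from \cite[(5.3.4)]{Szego}, together with the explicit hypergeometric error bound derived in the proof of Lemma~\ref{lema1} (which in fact controls the remainder whenever $|\gamma|\to\infty$, covering the present case $\gamma\to-\infty$). Writing $u:=\sqrt{1+\ka^2r^2}+\ka r$ and $s:=n+l+\tfrac{d-1}{2}$, the Jacobi argument is $z=2u^2-1=1+4\ka r\sqrt{1+\ka^2r^2}+4\ka^2r^2$, and I set $\gamma:=b=-s-\tfrac{\al}{2\ka s}$. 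Since now $\gamma\sim-\tfrac{\al}{2\ka s}$ grows only like $\ka^{-1}$, the remainder is $\mathcal{O}(\ka)$ rather than the $\mathcal{O}(\ka^2)$ of the harmonic case, which already accounts for the $\mathcal{O}(\ka)$ term in~\eqref{cota}. A short computation gives $z=1-2\gamma^{-1}\rho$ with $\rho=\tfrac{\al r}{s}+\mathcal{O}(\ka)$, whence $P_n^{(a;b)}(z)=L_n^{(2l+d-2)}\!\big(\tfrac{\al r}{s}\big)+\mathcal{O}(\ka)$. The prefactor is treated by writing $u^{c}=e^{c\,\operatorname{arcsinh}(\ka r)}$; since $c=-n-\tfrac{\al}{2\ka s}$ and $\operatorname{arcsinh}(\ka r)=\ka r+\mathcal{O}(\ka^3)$, its leading part is $e^{-\al r/(2s)}$ with an $\mathcal{O}(\ka)$ remainder, uniformly on $[R_0,R]$. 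Combining these and differentiating term by term via the standard derivative formulas for orthogonal polynomials (\cite[(4.21.7)]{Szego}) shows that $f^{\ka,\mathrm{C}}_{nl}$ agrees in $C^k(R_0,R)$ with the Euclidean Coulomb radial function $r^l e^{-\al r/(2s)}L_n^{(2l+d-2)}\!\big(\tfrac{\al r}{s}\big)$ up to $\mathcal{O}(\ka)$.

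For the second step I would apply Hilb's formula \cite[(8.22.4)]{Szego} with $\theta:=2l+d-2$ and $x:=\tfrac{\al r}{s}$. The key simplification is that $(4n+2\theta+2)x=4s\cdot\tfrac{\al r}{s}=4\al r$ \emph{exactly}, so the Bessel factor is already $J_{2l+d-2}(\sqrt{4\al r})$ with no further limit on its argument. Pulling $x^{-\theta/2}$ through the prefactor $r^l$ produces the power $r^{1-d/2}$; moreover $n+\tfrac{\theta+1}{2}=s$, so the Hilb normalization collapses to the manifestly nonzero constant $A_{nl}=\al^{-\theta/2}\,\Gamma(n+\theta+1)/n!$, and $e^{-x/2}\to1$. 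By Stirling's formula $A_{nl}\sim\al^{-\theta/2}n^{2l+d-2}$, and dividing the Hilb remainder, of order $n^{(2\theta-3)/4}$, by this normalization yields the stated relative error $\mathcal{O}(n^{-l-d/2+1/4})$. Adding the $\mathcal{O}(\ka)$ and $\mathcal{O}(n^{-l-d/2+1/4})$ contributions gives~\eqref{cota}, and the $C^k$ statement follows because Hilb's expansion may be differentiated term by term (\cite[(22.8.6)]{Hand}).

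The main obstacle I anticipate is the simultaneous control of the two coupled limits $\ka\to0$ and $n\to\infty$, keeping both remainders uniform on the annulus $R_0\le r\le R$ and ensuring they combine without interference. The prefactor analysis is genuinely more delicate than in Lemma~\ref{lema1}: here $u^{c}$ involves $\operatorname{arcsinh}(\ka r)$ and the exponent $c$ diverges only like $\ka^{-1}$, which is precisely what downgrades the curvature error from $\mathcal{O}(\ka^2)$ to $\mathcal{O}(\ka)$. Working on an annulus bounded away from the origin is natural because the limiting Euclidean Coulomb profile solves an equation that is singular at $r=0$; everything else is a faithful transcription of the harmonic argument.
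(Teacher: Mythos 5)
Your proposal is correct and follows essentially the same two-step argument as the paper's own proof: the same Jacobi-to-Laguerre degeneration with the identical choice $\gamma=-s-\tfrac{\al}{2\ka s}$ (whence the $\mathcal{O}(\ka)$ curvature error), the same prefactor analysis, and the same application of Hilb's formula with $\theta=2l+d-2$, $x=\tfrac{\al r}{s}$, yielding the same constant $A_{nl}=\al^{-\theta/2}\Gamma(n+\theta+1)/n!$ and the same stated error rate. The only cosmetic differences are your $\operatorname{arcsinh}$ treatment of the prefactor (the paper uses a direct Taylor expansion) and your term-by-term differentiation of Hilb's expansion for the $C^k$ bound, where the paper instead invokes the radial ODE satisfied by the eigenfunctions.
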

\begin{proof}
Again, we use the asymptotic relation between Laguerre and Jacobi polynomials \cite[(5.3.4)]{Szego}, from which it follows that \begin{equation*}
		P_{n}^{(\alpha ; \gamma)}\left(1-2 \gamma^{-1} r\right)=L_{n}^{(\alpha)}(r)+\mathcal{O}\left(\gamma^{-1}\right)
	\end{equation*} uniformly in $[0,R]$.
	
	If we take $\gamma=-\frac{2l+2n+d-1}{2}-\frac{\al}{2\ka(n+l+\frac{d-1}{2})}$ and notice that $\mathcal{O}(\gamma^{-1})=\mathcal{O}(\ka)$, we conclude\begin{multline*}
			 P_{n}^{\left(2l+d-2 ;-l-n-\frac{d-1}{2}-\frac{\al}{2\ka(n+l+\frac{d-1}{2})}\right)}\left(2\left(\sqrt{1+\ka^2r^2}+\ka r\right)^2-1\right) \\
			= P_{n}^{(2l+d-2; \gamma)}\left(1+4\ka r+\mathcal{O}(\ka^2)\right) = P_{n}^{(2l+d-2; \gamma)}\left(1-\frac{2r\al}{(n+l+\frac{d-1}{2})\gamma}+\mathcal{O}\left(\gamma^{-2}\right)\right) \\
			=L_{n}^{(2l+d-2)}\left(\frac{\al r}{n+l+\frac{d-1}{2}}\right)+\mathcal{O}\left(\kappa\right).
		\end{multline*}
		On the other hand, using Taylor's series,  \begin{multline*}	\left(\sqrt{1+\kappa^{2} r^{2}}+\ka r\right)^{-n-\frac{\al}{2\ka\left(n+l+\frac{d-1}{2}\right)}}=\left(1+\kappa r+\mathcal{O}\left(\kappa^{2}\right)\right)^{-n-\frac{\al}{2\ka\left(n+l+\frac{d-1}{2}\right)}}\\=\left(1-\frac{\al r}{2(n+l+\frac{d-1}{2})}\cdot\left(\frac{-\al}{2\ka\left(n+l+\frac{d-1}{2}\right)}\right)^{-1}+\mathcal{O}\left(\kappa^{2}\right)\right)^{-n-\frac{\al}{2\ka\left(n+l+\frac{d-1}{2}\right)}}\\=e^{-\frac{\al r}{2\left(n+l+\frac{d-1}{2}\right)}}+ \mathcal{O}\left(\kappa\right)\,,
	\end{multline*}
	also uniformly on $[0,R]$.
	
	Standard formulas for the derivatives of orthogonal polynomials (see e.g. \cite[(4.21.7)]{Szego} and
	\cite[(22.8.6)]{Hand}) allow us to show similar approximation results for derivatives of arbitrary order, and then \begin{equation*}
		\left\|	\psi^{\kappa,\mathrm{C}}_{nlm}-	\psi^{\mathrm{C}}_{nlm}\right\|_{C^k(B)}=\mathcal{O}(\ka)\,.
	\end{equation*} Now using again Hilb's asymptotic formula for the Laguerre polynomial \cite[(8.22.4)]{Szego} and the substitutions $x=\frac{\alpha r}{n+l+(d-1)/2}$ and $\theta=2l+d-2$ we are able to obtain the asymptotic expansion of $\psi\coul_{nlm}$:

\begin{equation*}r^l e^{\frac{-\al r}{2\left(n+l+\frac{d-1}{2}\right)}}L_n^{(2l+d-2)}\left(\frac{\alpha r}{n+l+\frac{d-1}{2}}\right)=A_{nl}J_{2l+d-2}\left(\sqrt{4\alpha r}\right)r^{1-d/2}+\mathcal{O}\left(n^{l+d/2-7/4}\right)\,,\end{equation*}
uniformly in $[R_0,R]$, with \begin{equation*}A_{nl}:=\frac{(n+2l+d-2)!}{k!\,\al^{l-1+d/2}}\,.\end{equation*}

Using Stirling's asymptotic formula for the factorial and the identity~\eqref{duplication}, we can estimate the constant $A_{nl}$ for large $n$ as\begin{equation*}A_{nl}=\frac{n^{2l+d-2}}{e^{2l+d-2}\al^{l+d/2-1}}+\mathcal{O}(n^{2l+d-3})\,.\end{equation*}

 Of course, since the eigenfunctions satisfy the radial equation \begin{equation*}\left(\frac{d^2}{dr^2}+\frac{d-1}{r}\frac{d}{dr}-\frac{l(l+n-2)}{r^2}+\frac{\al}{r}+\lambda_{nl}\coul\right)f_{nl}\coul(r)=0\,,\end{equation*}
 this uniform estimate can be easily promoted to a $C^k$ bound in $[R_0,R]$, and hence in $B$. The bound~\eqref{cota} follows by combining the above identities.
\end{proof}

The main difference with Lemma~\ref{lema1} is that now we are not rescaling eigenfunctions with a factor depending on the increasing energy, that is, we are not approximating in arbitrarily small balls. This can be understood as evidence of the fact that the key ingredient of the proof is only the degeneracy of eigenfunctions, rather than having arbitrarily large energies.

Now we follow the proof of Theorem~\ref{T1}. Let us take a large enough natural number $\widehat{n}$ that will be fixed later, which we assume to be much larger than $l_0$. For each integer $l$ smaller than $\widehat{n}$ we set\begin{equation*}\widehat{n}_l:=\widehat{n}-l\,,\end{equation*} so that the eigenvalue \begin{equation}\label{b}\lambda:=\lambda^{\kappa,\mathrm{C}}_{\widehat{n}_ll}
=-\frac{\al^2}{4(\widehat{n}+\frac{d-1}{2})^2}-\ka^2\left(\widehat{n}+\frac{d-1}{2}\right)^2\end{equation} does not depend on the choice of $l$. Note that, contrary to what happens in the analysis of the harmonic oscillator, no parity hypothesis is needed. Finally, we choose an eigenfunction of the Coulomb operator by setting
\begin{equation*}\psi:=\sum_{l=0}^{l_0}\sum_{m=1}^{d_l}c_{lm}A_{\widehat{n}_ll}^{-1}\psi^{\kappa,\mathrm{C}}_{\widehat{n}_llm},\end{equation*}
with eigenvalue $\lambda$ as in~\eqref{b}. For this function to make sense we need to ensure the existence of eigenfunctions $\psi^{\kappa,\mathrm{C}}_{nlm}$ because of the finite spectrum of the Coulomb operator of $\HH_\ka\coul$. Thus, we impose a restriction over the size of $\ka$: \begin{equation*}\ka<\frac{\al}{2}\left(n+l+\frac{d-1}{2}\right)^{-2}\,.\end{equation*}
Applying Lemma \ref{lema2}, we conclude that for any $\delta>0$ there exists a natural number $\widehat{n}$ large enough so that \begin{equation}\label{beta}\left\|\psi-w\right\|_{C^k(B)}<\delta\,.\end{equation} Indeed, substituting the asymptotic expressions previously obtained in the definition of $\psi$ we find \begin{equation*}
	\begin{aligned}
		\left\|\psi-w\right\|_{C^k(B)}&\leq \sum_{l=0}^{l_0}\sum_{m=1}^{d_l}|c_{lm}|\left\|A_{\widehat{n}_ll}^{-1}\psi^{\kappa,\mathrm{C}}_{\widehat{n}_llm}(r,\omega)-J_{2l+d-2}\left(\sqrt{4\alpha r}\right)r^{1-d/2}Y_{lm}(\omega)\right\|_{C^k(B)}\\
		 &=\sum_{l=0}^{l_0}\sum_{m=1}^{d_l}|c_{lm}|\left(\mathcal{O}\left(\widehat{n}_l^{-l-d/2+1/4}\right)+\mathcal{O}(\ka)\right)<\delta\,,
	\end{aligned}
\end{equation*}
provided that $\widehat{n}$ is large enough and $\ka$ is small.

To conclude, we combine (\ref{alpha}), (\ref{beta}) and the fact that we are working in normal geodesic coordinates on the hyperbolic space centered at the singularity $p_0$ of the Coulomb potential to infer that
\begin{equation*}
	\begin{aligned} &\left\|v-\psi\circ \exp_{p_0}^\ka\right\|_{C^k(B)}\leq\|v-w\|_{C^k(B)}+\left\|w-\psi\circ \exp_{p_0}^\ka\right\|_{C^k(B)}<\ep\,,
	\end{aligned}
\end{equation*}provided that $\ka$ is small enough.

\section{Global approximation with decay}
\label{S.GAT}

In this section we prove a global approximation theorem with decay for local solutions of the Helmholtz equation on manifolds with infinite injectivity radius at some point $p_0$ and that are hyperbolic outside a compact set (Theorem \ref{T2}). As an application of this result, we obtain a global approximation theorem for the heat equation with compactly supported Cauchy data on the hyperbolic space $\HH^d(\ka)$; this extends the analogous result for the Cauchy problem of the heat equation in Euclidean space proved in~\cite{Duke}.

\subsection{Proof of Theorem \ref{T2}}

Let us denote by $N$ the neighbourhood of $\overline\Omega$ where $w$ is defined. We start by taking a smooth function $\chi:M\rightarrow\mathbb{R}$ which is equal to $1$ in a neighbourhood $\Omega'$ of $\overline\Omega$ and identically zero outside $N$. We define the smooth function $w_1$ in $M$ given by $w_1:=\chi w$, which means that $w_1$ equals $0$ outside $N$ even though $w$ is not defined there.

It is well known~\cite{Uhlenbeck} that a generic smooth bounded domain $N'$ of $M$ has the property that $\lambda$ is not a Dirichlet eigenvalue of the Laplacian $\Delta_{M}$ on $N'$. Accordingly~\cite{Browd}, we can find a symmetric Dirichlet Green's function $G$ satisfying the distributional equation:
\begin{equation*}
	\begin{aligned}
		&\Delta_{M,x}G(x,y)+\lambda G(x,y)=\delta_y(x)\ \ \ \text{for all }x,y\in N'\times N',\\
		&G_{\big|\partial N'}(\cdot,y)=0\ \ \forall y\in N'.
	\end{aligned}
\end{equation*}
We also know that $G$ is bounded by a constant times $\rho_M(x,y)^{2-d}$ if $d\geq 3$ and by a constant times $\log\left(\rho_M(x,y)\right)$ if $d=2$, and is smooth outside the diagonal $\text{diag}(N'\times N')$.

In what follows we take $N'$ big enough to contain $\overline{N}$ and we also assume that $N'\backslash \overline\Om$ is connected (e.g., take $N'$ to be a large enough geodesic ball). Let $\dVol$ be the volume measure on $M$. Since $\lambda$ is not a Dirichlet eigenvalue of $\Delta_M$ on $N'$, it readily follows that
\begin{equation}\label{5.5}
	w_1(x)=\int_{N'}G(x,y)f(y)\,\dVol(y)\,,
\end{equation}
for all $x\in N'$, $f$ being the smooth function $f:=\Delta_{M} w_1+\lambda w_1$ supported on $N\backslash\overline{\Omega'}$. A standard continuity argument allows us to approximate the integral~\eqref{5.5} uniformly on $\Omega'$ by a Riemann sum of the form
\begin{equation}\label{5.6}
	w_2(x):=\sum_{n=1}^{n_{\max}}c_nG(x,x_n)\,,
\end{equation}
which is defined for all $x\in N'$. Concretely, for every $\delta>0$ there exist a large enough integer $n_{\max}$, real numbers $c_n$ and points $x_n\in N\backslash \overline{\Omega'}$ such that the finite sum~\eqref{5.6} satisfies the bound
\begin{equation*}
	\parallel w_1-w_2\parallel_{C^0(\Omega')}<\delta\,.
\end{equation*}

Let us now take a big enough geodesic ball $B_R$ centered at $p_0$ so that it contains $N$ (and the compact set $K_1$ on whose complement $M$ is isometric to $\HH^d(\kappa)$) but satisfying $\overline{B_R}\subset N'$ (this is possible because $N'$ is an arbitrary generic domain). We claim that we can sweep the singularities of the function $w_2$ outside $N\backslash\overline{\Omega'}$, in order to approximate it by another function $w_2'$ whose singularities are contained in the complement of $\overline{B_R}$. The proof is based on a duality argument and the Hahn-Banach theorem.

\begin{lemma}\label{vualayage}
	For every $\delta>0$, there is a finite amount of points  $\left\{x_n'\right\}_{n=1}^{n'_{\max}}$ in $N'\backslash \overline{B}_R$ and constants $c_n'$ such that the finite linear combination \begin{equation}\label{5.7}
		w_2'(x):=\sum_{n=1}^{n'_{\max}}c_n'G(x'_n,x)
	\end{equation}approximates the function $w_2$ uniformly in $\Omega$: \begin{equation}\label{5.8}
		\parallel w_2'-w_2\parallel_{C^k(\Omega)}<\delta\,,
	\end{equation}
for any fixed integer $k$.
\end{lemma}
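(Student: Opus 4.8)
The plan is to prove Lemma~\ref{vualayage} by a classical Runge-type sweeping argument, pushing the poles of $w_2$ out of $\overline{B_R}$ via a duality/Hahn--Banach scheme. First I would set up the right functional-analytic framework. Let $W$ denote the closed subspace of $C^k(\overline\Om)$ obtained as the closure of the linear span of the functions $\{G(x',\cdot)\restr{}{\Om} : x'\in N'\backslash\overline{B_R}\}$. The goal~\eqref{5.8} is exactly the assertion that $w_2\restr{}{\Om}$ lies in $W$, since $w_2$ is a finite combination of $G(\cdot,x_n)=G(x_n,\cdot)$ by symmetry of the Green's function. By the Hahn--Banach theorem, it suffices to show that every continuous linear functional $\mu$ on $C^k(\overline\Om)$ that annihilates $W$ also annihilates $w_2$. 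Such a functional is represented by a compactly supported distribution $\mu$ of order $\leq k$ on $\Om$, and the annihilation condition reads
\begin{equation*}
	\langle \mu, G(x',\cdot)\rangle = 0 \qquad \text{for all } x'\in N'\backslash\overline{B_R}.
\end{equation*}

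The heart of the argument is then to promote this pointwise vanishing into a statement that forces $\langle\mu,w_2\rangle=0$. I would define the potential $u(x'):=\langle \mu_y, G(x',y)\rangle$, a function of $x'\in N'$ away from $\supp\mu$. Using symmetry and the defining equation $\Delta_{M,x'}G(x',y)+\lambda G(x',y)=\delta_y(x')$, the function $u$ satisfies the Helmholtz equation $\Delta_M u+\lambda u=0$ on $N'\backslash\supp\mu$, with Dirichlet boundary condition $u\restr{}{\pd N'}=0$. By hypothesis $u$ vanishes on the open set $N'\backslash\overline{B_R}$. Since $N'\backslash\overline\Om$ is connected and $u$ solves an elliptic equation there, the unique continuation principle propagates the vanishing of $u$ from $N'\backslash\overline{B_R}$ throughout the connected component of $N'\backslash\supp\mu$ that meets it; in particular $u\equiv0$ on a neighbourhood of each source point $x_n$, which all lie in $N\backslash\overline{\Om'}\subset B_R$. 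The final step is to compute
\begin{equation*}
	\langle \mu, w_2\rangle=\sum_{n=1}^{n_{\max}}c_n\,\langle\mu_y,G(x_n,y)\rangle=\sum_{n=1}^{n_{\max}}c_n\,u(x_n)=0,
\end{equation*}
which is exactly what Hahn--Banach requires, completing the proof.

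The main obstacle I anticipate is ensuring that unique continuation is applicable and that the connectivity geometry lines up correctly. Specifically, one must be careful that $\supp\mu\subset\Om$ while the source points $x_n$ lie outside $\overline{\Om'}$, and that the complement region through which vanishing propagates---namely $N'\backslash(\overline\Om\cup\supp\mu)$---is connected and contains both the region $N'\backslash\overline{B_R}$ where $u$ is known to vanish and the points $x_n$. The connectivity of $M\backslash\overline\Om$ (equivalently $N'\backslash\overline\Om$, since $N'$ is a large ball) is precisely the hypothesis invoked here, and it is what guarantees the absence of ``trapped'' components where unique continuation could fail to reach. I would also note that unique continuation for $\Delta_M+\lambda$ on a smooth manifold is standard (Aronszajn), so no curvature smallness is needed. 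A minor technical point is verifying that $u$ is genuinely a distributional solution of the Helmholtz equation away from $\supp\mu$; this follows by differentiating under the distributional pairing and using the explicit local regularity of $G$ off the diagonal, together with elliptic regularity to upgrade $u$ to a classical solution where it is defined.
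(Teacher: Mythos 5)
Your proposal is correct and follows essentially the same route as the paper's proof: a Hahn--Banach duality argument in which the annihilating functional is converted, via the Green's function, into a potential $u$ that solves the Helmholtz equation away from its support, vanishes on $N'\backslash\overline{B}_R$, and is then forced to vanish at the poles $x_n$ by unique continuation through the connected complement of $\overline\Om$. The only (harmless) difference is the choice of duality space: you dualize directly in $C^k(\overline\Om)$, so the functional is a distribution of order $\leq k$ and no final regularity upgrade is needed, whereas the paper dualizes in $L^2(\Om')$ with an $L^2$ function $g$ and then promotes the resulting $L^2$ approximation to a $C^k(\Om)$ bound by standard elliptic estimates.
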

\begin{proof}
	Consider the space $V$ of all finite linear combinations of the form (\ref{5.7}) where $x_n'$ can be any point in  $N'\backslash\overline{B}_R$ and the constants $c_n'$ take arbitrary values. Restricting these functions to the set $\Omega'$, we can regard $V$ as a subspace of the Banach space $L^2(\Omega'):=\left\{f\in L^2(M): \supp f\subseteq \overline{\Omega'}\right\}$.	
	It is well know that the space $L^2(\Omega')$ is its own dual. Let us take any function $g\in L^2(\Omega')$ such that $\int_{\Omega'}f g=0$ for all $f\in V$, that is, $g$ is orthogonal to the subspace $V$. We define a function $F\in L^2(N')$ as
\begin{equation*}
		F(x):=\int_{N'}G(x,y)g(y)\,\dVol(y)=\int_{\Omega'}G(x,y)g(y)\,\dVol(y),
	\end{equation*}so that $F$ satisfies the equation \begin{equation*}
		\Delta_{M}F+\lambda F=g
	\end{equation*}
on $N'$.
%To check that $F\in L_{loc}^1(M)$, we note that if $U$ is any bounded subset of $M$, then  \begin{equation*}\begin{aligned}
%			&\int_U|F(x)|dVol(x)\leq\int_U\int_{\Omega'}\frac{C}{\rho_M(x,y)^{d-2}}|g(y)|dVol(y)dVol(x)\\&\leq C\left\|g\right\|_{L^2(\Omega')}\sup_{y\in M}\int_U\frac{dVol(x)}{\rho_M(x,y)^{d-2}}\leq C\left\|g\right\|_{L^2(\Omega')}\int_{U^*}\frac{1}{|z|^{d-2}}dz<\infty,
%		\end{aligned}
%	\end{equation*}where we have used the rearrangement inequality and noted by $U^*$ the euclidean $d-$dimensional centered ball with the same volume $|U|$ than the set $U$.
	
	Notice that $F$ is identically zero on $N'\backslash\overline{B}_R$ by the definition of the function $g$ and that $F$ satisfies the elliptic equation $\Delta_{M}F+\lambda F=0$ in $N'\backslash \overline{\Omega'}$. Since $N'\backslash\overline{\Omega'}$ is connected provided that $\Omega'$ is close enough to $\Omega$, and contains the set $N'\backslash \overline{B_R}$, by elliptic analytic continuation we conclude that the function $F$ must vanish on the whole $N'\backslash\overline{\Omega'}$. It then follows that, for $y\notin \overline{\Omega'}$,
\begin{equation*}
		0=F(y)=\int_{N'}G(y,x)g(x)\,\dVol(x)\,.
	\end{equation*}
Therefore,
\begin{equation*}
		\int_{N'}w_2(y)g(y)\,\dVol(y)=0\,,
	\end{equation*}
which implies that $w_2$ cannot be separated from the space $V$ and then can it be uniformly approximated on $\Omega'$ by elements of the subspace $V$, due to the Hahn-Banach theorem. Acordingly, there is a finite set of points $\left\{x_n'\right\}_{n=1}^{n'_{\max}}$ in $N'\backslash\overline{B}_R$ and reals $c_n'$ such that the function (\ref{5.7}) satisfies the estimate
	\begin{equation}\label{L2}
		\left\|w_2'-w_2\right\|_{L^2(\Omega')}<\delta\,.
	\end{equation} Standard elliptic estimates on manifolds allow us to promote~\eqref{L2} to a $C^k$ bound by restricting the domain to $\overline{\Omega}\subset\Omega'$, see~\eqref{5.8}, and the lemma follows.
\end{proof}

Next, we notice that the function $w'_2$ satisfies
\begin{equation*}
\Delta_Mw'_2+\lambda w'_2=0\
\end{equation*}
on the ball $B_R$, whose interior contains $\Omega'$ and $\Om$. Exploiting the fact that the injectivity radius of $M$ at $p_0$ is infinite, let us consider spherical geodesic coordinates on $B_R$; that is $\rho:=\rho_M(x,p_0)$ and $\omega\in\mathbb{S}^{d-1}$. Expanding $w'_2$ with respect to the angular variables in a series of spherical harmonics, essentially as in the proof of Theorem~\ref{T1}, we can write
\begin{equation}\label{eq.series}
w'_2=\sum_{l=0}^\infty\sum_{m=1}^{d_l}w_{lm}(\rho)Y_{lm}(\omega)
\end{equation}
with
\begin{equation*}
w_{lm}(\rho):= \int_{\SS^{d-1}} w(\rho,\om)\, Y_{lm}(\om)\, d\si(\om)\,,
\end{equation*}
where $d\si$ is the canonical measure on~$\SS^{d-1}$. The series converges in $L^2(B_R)$.

Using the assumption that $M\backslash K_1$ is isometric to $\HH^d(\kappa)\backslash K_2$, we infer that we can identify a one-sided neighborhood $\{R-\eta<\rho<R\}$ of $\partial B_R$ in $M$ with the corresponding neighborhood in $\HH^d(\kappa)$, for some small enough $\eta>0$. A simple computation analogous to that of the proof of Theorem~\ref{T1} shows that the functions $w_{lm}$ are solutions to the radial ODE
\begin{equation}\label{eq:radial}	
\pd_{\rho\rho}w_{lm}+(d-1)\ka\frac{\cosh(\ka\rho)}{\sinh(\ka \rho)}\pd_\rho w_{lm}-\ka^2\frac{l(l+d-2)}{\sinh(\ka \rho)^2}w_{lm}\lambda w_{lm}=0\,,
\end{equation}
on the interval $(R-\eta,R)$. Therefore, $w_{lm}$ must be a linear combination of the form
\begin{multline*}
		w_{lm}(\rho)=\frac{C_{lm}}{\sinh(\ka \rho)^{d/2-1}}P^{1-d/2-l}_{\nu}\left(\cosh(\ka\rho)\right)\\+\frac{C_{lm}'}{\sinh(\ka\rho)^{d/2-1}}\left(e^{-i\pi\left(\frac{d}{2}-1+l\right)}Q^{d/2-1+l}_\nu\left(\cosh(\ka\rho)\right)\right),
\end{multline*}where  $\nu=-\frac{1}{2}-\frac{i}{2\kappa}\sqrt{4\lambda-(d-1)^2\kappa^2}$ (recall that $\lambda>\left[\frac{(d-1)\ka}{2}\right]^2$), $C_{lm},C_{lm}'$ are complex constants,  and $P_\nu^\mu$ and $Q_\nu^\mu$ are associated Legendre conical functions of the first and second kind, respectively.

A simple observation allows us to infer that the series~\eqref{eq.series} actually converges in $H^2(B_R)$ (or any other Sobolev space). Indeed, as $M$ is hyperbolic outside a compact set, and denoting by $\Delta_\ka$ the Laplacian on the corresponding hyperbolic space, we know that
\begin{equation*}
\Delta_\ka w'_2+\la w'_2=\Theta
\end{equation*}
for some smooth function $\Theta$ whose support is contained in $B_{R-\eta}$. Therefore, expanding $\Theta$ in a series of spherical harmonics using the spherical geodesic coordinates $(\rho,\om)$, we easily deduce that the partial sums
\begin{equation*}
\sum_{l=0}^{l_0}\sum_{m=1}^{d_l}(\Delta_\ka+\la)[w_{lm}(\rho)Y_{lm}(\omega)]
\end{equation*}
converge in $L^2(B_R)$ to $\Theta$ as $l_0\to\infty$. As $B_R$ is bounded, the choice of the elliptic operator and of the volume form used to define Sobolev spaces is irrelevant, so the claim follows.

The $H^2(B_R)$ convergence of the series~\eqref{eq.series} implies that for any $\delta>0$ there is an integer $l_0$ such that the finite sum
\begin{equation}\label{eq.v'}
	v':=\sum_{l=0}^{l_0}\sum_{m=1}^{d_l}w_{lm}(\rho)Y_{lm}(\theta)
\end{equation}
is close to $w'_2$ in the sense:
\begin{equation}\label{eq:Hk}
\parallel v'-w'_2\parallel_{H^2(B_R)}<\delta\,.
\end{equation}
In fact, since the solutions to the radial equation~\eqref{eq:radial} are well defined (and smooth) for all $\rho>R-\eta$, this allows us to extend the function $v'$ beyond $B_R$ to define a function (that we still denote by $v'$) on the whole $M$. It is clear that
\begin{equation*}
\Delta_M v'+\lambda v'=0
\end{equation*}
on $M\backslash \overline{B_{R-\eta}}$. Moreover, by the asymptotic properties of the conical Legendre functions (see, e.g. \cite[Chapter 14]{Hand}), $v'$ has the pointwise decay
\begin{equation}\label{estimrad}
	|v'(x)|< Ce^{\frac{1-d}{2}\ka \rho}\,.
\end{equation}
%in an $L^2$ sense\begin{equation*}\sup _{r>0} \frac{1}{r} \int_{B_{r}}|v(x)|^{2} dVol(x)=\sup _{r>R} \frac{1}{r} \int_{B_{r}}|v(x)|^{2} dVol(x)<\infty,\end{equation*}

However, the function $v'$ does not satisfy, in general, the Helmholtz equation on the whole $M$. To address this issue, we set
\begin{equation*}
f_1:=\Delta_M v'+\lambda v'\,,
\end{equation*}
which is a function that is supported in the interior of $B_{R-\eta}$, and is bounded as
\begin{equation*}
\|f_1\|_{L^2(M)}=\|f_1\|_{L^2(B_{R-\eta})}<C\delta\,,
\end{equation*}
as a consequence of the estimate~\eqref{eq:Hk}. Following~\cite[Section 6]{Perry}, we can define the function
\begin{equation*}
f_2:=(\Delta_M+\lambda+i0)^{-1} f_1
\end{equation*}
for $\lambda>\left[\frac{(d-1)\ka}{2}\right]^2$, which satisfies the equation $(\Delta_M+\lambda)f_2=f_1$ on $M$ and the Agmon--H\"ormander type bound
\begin{equation}\label{estimAH}
\sup _{R>0} \frac{1}{R} \int_{B_{R}}|f_2(x)|^{2} \,\dVol(x)<C\delta \,.
\end{equation}
See~\cite[Section 6]{Perry} for the precise meaning of the resolvent operator $(\Delta_M+\lambda+i0)^{-1}$ and the corresponding estimates.

Finally, if we define the function
\begin{equation*}
v:=v'-f_2
\end{equation*}
we deduce that it satisfies the Helmholtz equation $\Delta_M v+\lambda v=0$ on $M$, and decays at infinity as
\begin{equation*}
\sup _{R>0} \frac{1}{R} \int_{B_{R}}|v(x)|^{2} \,\dVol(x)<\infty
\end{equation*}
by the estimates~\eqref{estimAH} and~\eqref{estimrad} (it is straightforward to check, using the expression of the volume form on the hyperbolic space in geodesic coordinates, that the pointwise decay~\eqref{estimrad} satisfies an Agmon--H\"ormander type bound). Moreover, all the previous estimates allow us to write

\begin{equation*}
\|v-w\|_{L^2(\Om')}\leq \|f_2\|_{L^2(\Om')}+\|v'-w'_2\|_{L^2(\Om')}+\|w'_2-w_2\|_{L^2(\Om')} +\|w_2-w\|_{L^2(\Om')}<C\delta\,.
\end{equation*}

Finally, since $v$ and $w$ satisfy the Helmholtz equation in $\Omega'$, standard elliptic estimates imply that this $L^2$ bound can be promoted to a $C^k$ bound \begin{equation*}
	\parallel w-v\parallel_{C^k(\Omega)}<\ep\,.
\end{equation*}
The theorem follows observing that when $M$ is the hyperbolic space $\HH^d(\kappa)$, the truncated series $v'$, cf. Equation~\eqref{eq.v'}, does satisfy the Helmholtz equation on the whole space (because each summand satisfies it), and hence the function $f_2$ is identically zero on $M$. Accordingly, $v=v'$ and satisfies the pointwise bound~\eqref{estimrad}, as claimed.

\subsection{The heat equation}

In this subsection, for simplicity we will restrict ourselves to the case when the manifold is the hyperbolic space $M=\HH^d(\ka)$. To state our approximation theorem for the heat equation, given a function $v_0\in C^\infty_0(\HH^d(\ka))$, let us denote by $v:=e^{t\Delta_\ka}v_0$ the only solution to the Cauchy problem
\begin{equation}\label{heat}
	\pd_t w-\De_\ka w=0
\end{equation} on $\HH^d(\ka)\times (0,\infty)$ and
\begin{equation*}
	w(x,0)=v_0(x)\,,
\end{equation*}
for all $x\in \HH^d(\ka)$,  such that $w(x,t)$ tends to zero as $\rho_\ka(x)\rightarrow \infty$ for all $t>0$. It is well known that $w$ can be written in terms of the hyperbolic heat kernel $K(t,x,y)$ as
\begin{equation*}
	w(x,t)=\int_{\HH^d(\ka)}C_dK(t,x,y)v_0(y)\,\dVol(y)\,,
\end{equation*}where $C_d$ is a dimensional constant.

In the Appendix we show that this kernel depends only on the geodesic distance $\rho\equiv\rho_\ka(x,y)$ between $x$ and $y$, and we find an explicit formula for it. We hence use the notation $K(t,x,y)=H(t,\rho)$, with $H(t,\rho)=0$ if $t<0$ and for $t>0$ it is given by the following formula.

If the dimension $d$ is odd, $d=2m+1$, then
\begin{equation*}
	H(t,\rho)=\frac{(-1)^m}{2^m\pi^m\sqrt{4\pi t}}\left(\frac{\ka}{\sinh(\ka \rho)}\frac{\partial}{\partial \rho}\right)^m e^{-\ka^2m^2t-\rho^2/4t}\,.
\end{equation*}
If the dimension $d$ is even, $d=2 m+2$, then
\begin{equation*}
	H(t,\rho)=\frac{(-1)^me^{-\frac{(2m+1)^2\ka^2t}{4}}\ka}{t^{3/2}2^{m+5/2}\pi^{m+3/2}}\left(\frac{\ka}{\sinh(\ka \rho)}\frac{\partial}{\partial \rho}\right)^m\int_{\rho}^\infty\frac{se^{-s^2/4t}ds}{\sqrt{\cosh(\ka s)-\cosh(\ka \rho)}}\,.
\end{equation*}
We also state some pointwise bounds of this kernel and its derivatives in the Appendix.
Now, given a space-time domain $\Om\subset \HH^d(\ka)\times \RR$, we denote by
\begin{equation*}
\Om[t]:=\{x\in \HH^d(\ka): (x,t)\in\Om \}
\end{equation*}
its intersection with the time~$t$ slice. The following global approximation theorem for the heat equation on the hyperbolic space extends an analogous result proved in~\cite{Duke} in the Euclidean case.

\begin{theorem}\label{T.heat}
	Let $w$ satisfy Equation~\eqref{heat} in a neighbourhood of the closure of a bounded domain $\Om \subset \HH^d(\ka)\times (0,\infty)$. Suppose that the complement of $\overline{\Om[t]}$ in $\HH^d(\ka)$ is connected for all~$t$ and fix an integer~$k$ and $\ep>0$. Then there exists $v_0 \in C^\infty_0(\HH^d(\ka))$, such that the function $v(x,t):= e^{t\De_\ka}v_0(x)$ approximates~$w$ as
	\begin{equation*}
	\|v-w\|_{C^k(\Om)}<\ep\,.
	\end{equation*}
\end{theorem}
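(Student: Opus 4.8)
The plan is to mimic the strategy used for the elliptic Theorem~\ref{T2}, but now working with the hyperbolic heat kernel $H(t,\rho)$ in place of the resolvent Green's function $G$. The essential structure is a duality/Hahn--Banach argument that lets us replace a representation of $w$ by heat potentials sourced \emph{inside} the domain with one sourced by initial data in $C^\infty_0(\HH^d(\ka))$. First I would fix a space-time neighbourhood $N$ of $\overline\Om$ on which $w$ solves the heat equation, and use a cutoff $\chi$ to form $w_1:=\chi w$, so that $f:=(\pd_t-\Delta_\ka)w_1$ is smooth and supported in $N\setminus\overline{\Om'}$ for a slightly smaller neighbourhood $\Om'$ of $\overline\Om$. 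By Duhamel's principle one can then write $w_1$ on $\Om'$ as a convolution of $f$ against the heat kernel $H$, and approximate this integral uniformly on $\overline\Om$ by a finite Riemann sum
\begin{equation*}
w_2(x,t):=\sum_{n=1}^{n_{\max}} c_n\, H\bigl(t-t_n,\rho_\ka(x,x_n)\bigr)Y_{lm},
\end{equation*}
i.e.\ a finite combination of shifted heat kernels with poles $(x_n,t_n)\in N\setminus\overline{\Om'}$, sitting at times \emph{strictly earlier} than the times appearing in $\Om$ (so that each summand already solves the heat equation on the relevant part of $\Om$).

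The heart of the argument is the analog of Lemma~\ref{vualayage}: a sweeping (\emph{balayage}) step that pushes the singularities $(x_n,t_n)$ of $w_2$ out to $t_n\le 0$, so that the resulting approximant is a genuine solution of the Cauchy problem with initial data at $t=0$. I would set up the duality in the Banach space $L^2(\Om')$ (over the relevant time window), let $V$ be the span of heat potentials with poles at time $0$ and spatial locations ranging over $\HH^d(\ka)$, and suppose $g\perp V$. Defining the ``dual'' solution by integrating $g$ against the kernel \emph{backwards} in time, the orthogonality to $V$ forces this backward solution to vanish on the time-$0$ slice; I would then invoke backward uniqueness for the heat equation together with the connectedness hypothesis on $\HH^d(\ka)\setminus\overline{\Om[t]}$ (and unique continuation in the spatial variable) to conclude that $g$ annihilates $w_2$ as well. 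Hahn--Banach then yields a finite sum $v_0:=\sum_n c_n'\,\delta_{x_n'}$-type initial data, suitably mollified into $C^\infty_0(\HH^d(\ka))$, whose heat evolution $v=e^{t\Delta_\ka}v_0$ approximates $w_2$ in $L^2(\Om')$; parabolic interior estimates promote this to the desired $C^k(\Om)$ bound. Combining with the earlier approximations $w\approx w_1\approx w_2$ closes the proof.

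The main obstacle I anticipate is the sweeping/uniqueness step in this parabolic setting. Unlike the elliptic case, where elliptic analytic continuation on the connected set $N'\setminus\overline{\Om'}$ suffices, here one must handle the time direction carefully: the relevant uniqueness is \emph{backward} uniqueness for the heat equation, which is far more delicate than forward uniqueness, and one must ensure the pole times can genuinely be swept to $t\le 0$ while keeping the poles outside $\overline{\Om[t]}$ for every slice $t$. The connectedness assumption is imposed precisely so that spatial unique continuation propagates the vanishing across each slice, and the explicit Gaussian-type decay of $H(t,\rho)$ and its $\rho$-derivatives (recorded in the Appendix) is needed to justify convergence of the potentials and the duality pairings. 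I would expect the cleanest route to combine the explicit kernel bounds with a standard Runge-type approximation for the heat operator, reducing the novel content to verifying that the hyperbolic heat kernel enjoys the same qualitative properties (backward uniqueness, kernel decay) that make the Euclidean argument of~\cite{Duke} work.
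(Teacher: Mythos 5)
Your opening steps (cutoff, Duhamel representation, Riemann-sum approximation of $w_1$ by heat kernels with poles in $N\setminus\overline{\Om'}$) coincide with the paper's, but the heart of your argument --- the balayage that sweeps the poles to the time-zero slice so that Hahn--Banach directly produces Cauchy data --- has a genuine gap, and it sits exactly where you anticipated trouble. In your duality step, orthogonality of $g\in L^2(\Om')$ to the span $V$ of the kernels $H(t,\rho_\ka(\cdot,y))$ says that the backward potential $F(y,s):=\int_{\Om'}g(x,t)\ms H(t-s,\rho_\ka(x,y))\,\dVol(x)\,dt$ vanishes on the slice $\{s=0\}$. Backward uniqueness in the \emph{whole} space (where $F$ solves the backward equation with no source) can then kill $F$ on $\HH^d(\ka)\times(-\infty,t_{\min})$, where $t_{\min}>0$ is the infimum of times in $\supp g\subset\overline{\Om'}$. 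But the poles of $w_2$ live at times inside $[t_{\min},t_{\max}]$, and to conclude $\int g\,w_2=0$ you must show that $F$ vanishes at points $(x_n,t_n)$ with $t_n\geq t_{\min}$ and $x_n\notin\overline{\Om'[t_n]}$. Neither of your tools reaches them: the anisotropic unique continuation theorem for parabolic equations (\cite[Theorem 2.1]{Duke}) propagates vanishing \emph{horizontally}, within each fixed time slice, and your vanishing set $\HH^d(\ka)\times(-\infty,t_{\min})$ meets none of the slices $s\geq t_{\min}$; while backward uniqueness in the ill-posed direction requires the equation to hold on the whole spatial domain, which fails for $s\in[t_{\min},t_{\max}]$ because of the hole $\overline{\Om'[s]}$ carrying the source $g$. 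What your scheme actually needs is backward uniqueness \emph{without boundary conditions in an exterior domain} (an Escauriaza--Seregin--\v{S}ver\'ak type theorem), here moreover on hyperbolic space and with a time-dependent excluded region; this is a deep Carleman-estimate result, not among the tools you invoke and not established in this setting. So the Hahn--Banach step does not close.

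The paper avoids this issue by never sweeping in time. Its Lemma~\ref{Lm1} (the analog of Lemma~\ref{vualayage}) sweeps the poles \emph{spatially}, into a compact set in the complement of $\overline{B_R}\times\RR$ at unconstrained times; there the dual function vanishes on a set meeting every time slice, so horizontal unique continuation suffices. The passage to Cauchy data is then achieved by an entirely different mechanism: using the large-$|t|$ decay of the heat kernel, one takes the Fourier transform of $w_2$ in time, expands in spherical harmonics, solves the resulting radial ODEs by Legendre functions of the second kind with explicit exponential bounds, and truncates to obtain a global solution $w_3$ of the heat equation on all of $\HH^d(\ka)\times\RR$ with controlled spatial growth; Grigor'yan's uniqueness class for the (forward, well-posed) Cauchy problem \cite[Lemma 2.1]{Mura} then identifies $w_3$ with the heat evolution of its trace $f=w_3(\cdot,0)$, which is finally cut off to produce $v_0\in C^\infty_0(\HH^d(\ka))$. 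No backward uniqueness is used anywhere. To salvage your route you would have to prove the exterior backward uniqueness theorem on $\HH^d(\ka)$, or else restructure along the paper's lines. (Two minor points: the factor $Y_{lm}$ in your formula for $w_2$ is spurious, and your claim that the poles can be taken at times strictly earlier than those in $\Om$ is unjustified --- the Riemann-sum step only places them in $N\setminus\overline{\Om'}$; fortunately each summand is smooth and caloric on $\Om'$ anyway.)
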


\begin{proof}
We denote by $N$ a neighborhood of $\Om$ where $w$ is defined. By the hypoellipticity of the parabolic equations we know that $w\in C^\infty(N)$. Taking geodesic spherical coordinates centered at some point of $\HH^d(\ka)$, we can assume that $N$ is contained in $B_R\times (0,T)$, for some geodesic ball $B_R$ of large enough radius $R$. Taking a smooth cut-off function $\chi:\HH^d(\ka)\times\RR\rightarrow\RR$ that is equal to $1$ in a neighborhood $\Om'\subset N$ of $\overline \Om$ and is identically zero outside $N$, we infer that the function $w_1:=\chi w$ is compactly supported and satisfies the equation
\begin{equation*}
		\partial_t w_1-\Delta_{\ka}w_1=\phi\,,
\end{equation*}
for some smooth function $\phi$ on $\HH^d(\ka)\times \RR$ which is supported on $N\backslash\overline{\Om'}$. Using the properties of the heat kernel it is easy to check that
 \begin{equation*}
		w_1(x,t)=\int_{\HH^d(\ka)\times \RR}C_dH(t-s,\rho_\ka(x,y))\phi(s,y)\,ds\,\ dVol(y)\,.
\end{equation*}

If $K$ is any compact set with nonempty interior that is contained in the complement of $\overline{B_R}\times \RR$ and whose time projection\begin{equation*}\left\{t\in\RR: (x,t)\in K\text{ for some }x\in \HH^d(\ka)\right\}\end{equation*}
contains that of $N$, we claim that there is a finite sum
\begin{equation*}
		w_2(x,t):=\sum_{j=0}^{j_ {\max}}c_jH(t-s_j,\rho_\ka(x,y_j)),
	\end{equation*}
with $c_j$ real constants and $(y_j,s_j)\in K$ such that
\begin{equation}\label{eq.w1w2}
		\left\|w_1-w_2\right\|_{C^k(\Om)}<\delta,
	\end{equation}
for any fixed $k$ and $\delta>0$. To prove this statement we use the next two lemmas.
\begin{lemma}\label{Lm1}
		Let $U$ be a domain in $\HH^d(\ka)\times \RR$ such that $U[t]$ is connected for all $t\in \RR$. Consider a point $(y,s)\in U$ and a bounded domain $K\subset U$ such that $K[s]\neq\emptyset$. Then, for any $\epsilon>0$ and any bounded domain $W\subset U^c$ there exist a finite set of points $\{(y_j,s_j)\}_{j=1}^J$ in $K$ and real constant $\{b_j\}_{j=1}^J\subset\RR$ such that
\begin{equation*}
			\left\|	H(\cdot-s,\rho_\ka(\cdot,y))-\sum_{j=1}^Jb_jH(\cdot-s_j,\rho_\ka(\cdot,y_j))\right\|_{C^k(W)}<\epsilon\,.
		\end{equation*}
\end{lemma}
\begin{proof}
We assume that $(y,s)$ does not belong to $K$, as otherwise the statement is trivial. Let us take a proper bounded subdomain $U_1\subset U$ containing $(y,s)$ and $K$. We can assume that $U_1[t]$ is connected for all $t$. Consider the space $\mathcal S$ of all finite linear combinations of the fundamental solution with poles belonging to $K$, that is, \begin{equation*}
		\mathcal S:=\spn_\RR\left\{H(\cdot-\tau,\rho_\ka(\cdot,z)): (z,\tau)\in K\right\}\,.
	\end{equation*}
	Restricting these functions to a neighborhood $W'$ of $W$ contained in the complement of $\overline{U_1}$, $\mathcal S$ can be regarded as a subspace of the Banach space $L^2(W')$, which is its own dual. The lemma follows arguing as in the proof of Lemma~\ref{vualayage}, to conclude that there exists a function \begin{equation*}
		v:=\sum_{j=1}^Jb_jH(\cdot-s_j,\rho_\ka(\cdot,y_j))
	\end{equation*}
in $\mathcal S$ such that
\begin{equation*}
		\left\|H(\cdot-s,\rho_\ka(\cdot,y))-v\right\|_{C^k(W)}< \epsilon\,.
	\end{equation*}
For this, one just needs to use the anisotropic unique continuation theorem for parabolic equations (see e.g.~\cite[Theorem 2.1]{Duke}) and apply standard parabolic estimates to promote an $L^2(W')$ bound to a $C^k(W)$ bound.
\end{proof}

The second lemma follows from an easy continuity argument that makes use of the properties of the heat kernel $H(t,\rho)$ (see the Appendix), so we omit its proof.

\begin{lemma}\label{Lm2}
Let $\varphi:\HH^d(\ka)\times\RR\rightarrow\RR$ be a smooth function of compact support. For any bounded domain $W$ contained in the complement of $\supp \varphi$ and any $\epsilon>0$, there exist a finite set of points $\{(y_j,s_j)\}_{j=1}^J$ in $\supp\varphi$ and constants $\{c_j\}_{j=1}^J$ such that \begin{equation*}
			 \left\|\int_{\HH^d(\ka)\times\RR}H(\cdot-s,\rho_\ka(y,\cdot))\varphi(y,s)\,\dVol(y)\,ds-\sum_{j=1}^Jc_jH(\cdot-s_j,\rho_\ka(y_j,\cdot))\right\|_{C^k(W)}<\epsilon\,.
		\end{equation*}
\end{lemma}

Now, by Lemma~\ref{Lm2}, taking $U=(\HH^d(\ka)\times \RR)\backslash \overline\Om$ and $W=\Om$, we can approximate the function $w_1$ by a linear combination of fundamental solutions with poles contained in the set $N\backslash \overline{\Om'}$. Hence, noticing that $U[t]$ is connected for all $t$, we can use Lemma~\ref{Lm1} to construct a function $w_2$ satisfying the estimate~\eqref{eq.w1w2} with poles contained in a compact set $K$ of the complement of $\overline{B_R}\times \RR$, as claimed.

Next, notice that the decay properties of the fundamental solution $H(t,\rho)$ (see the Appendix) imply that for large $|t|$
\begin{equation*}
		\sup_{x\in B_R}\left|D^\alpha_x\partial^k_tw_2(x,t)\right|\leq C_\alpha |t|^{-3/2} e^{\frac{-|t|\ka^2(d-1)^2}{4}} \,.
	\end{equation*}
Then $w_2$ satisfies the uniform $L^1$ bound
\begin{equation*}
		\sup_{x\in B_R}\int_{-\infty}^{\infty}\left|D^\alpha_x\partial^k_tw_2(x,t)\right|dt<C_{\alpha,k}\,,
	\end{equation*}
for any $\alpha$ and $k$. For all $x\in B_R$, the mapping properties of the Fourier transform ensure that the Fourier transform of $w_2$ with respect to time,\begin{equation*}
		\widehat{w}_2(x,\tau):=\frac{1}{2\pi}\int_{-\infty}^\infty w_2(x,t)e^{-i\tau t}dt\,,
	\end{equation*}
is bounded and depends continuously on $\tau$. Since this holds for all derivatives of $w_2$ with respect to $t$, we infer that its Fourier transform falls off as
\begin{equation*}
		\sup_{x\in B_R}\left|D^\alpha_x\widehat{w}_2(x,\tau)\right|<\frac{C_n}{1+|\tau|^n}
	\end{equation*}
for any $n$. Of course, $\widehat{w}_2$ is a smooth function of $x\in B_R$ because so is $w_2$. This implies that the inverse Fourier transform formula \begin{equation}\label{conv}
		D^\alpha_x \partial_t^kw_2(x,t)=\int_{-\infty}^\infty (i\tau)^kD_x^\alpha \widehat{w}_2(x,\tau)e^{i\tau t}d\tau
	\end{equation}
holds pointwise for $(x,t)\in B_R\times\RR$. In particular, as $\partial_tw_2-\Delta_{\ka}w_2=0$ in that set, it follows that
\begin{equation}\label{eq.tau}
\Delta_{\ka}\widehat{w}_2(x,\tau)-i\tau\widehat{w}_2(x,\tau)=0
\end{equation}
for all $(x,\tau)\in B_R\times \RR$.
	
	We next expand $\widehat{w}_2(x,\tau)$, with $x\in B_R$ for each $\tau\in\RR$, in a basis of spherical harmonics on the unit sphere $\mathbb S^{d-1}$, which we denote as usual by
\begin{equation*}
		\left\{Y_{lm}(\omega):l\geq 0, 1\leq m\leq d_l\right\}
	\end{equation*}
with $\omega\in \mathbb{S}^{d-1}$, and assume to be normalized so that they are an orthonormal basis of $L^2(\SS^{d-1})$. We recall that $\Delta_{\SS^{d-1}}Y_{lm}=-\mu_lY_{lm}$, where $\mu_l=l(l+d-2)$ is an eigenvalue of the Laplacian on $\mathbb S^{d-1}$ of multiplicity \begin{equation*}d_l=\frac{2l+d-2}{l+d-2}{l+d-2\choose l}\,.\end{equation*}

More precisely, $\widehat{w}_2$ takes the form:
	\begin{equation}\label{conv2}
		\widehat{w}_{2}(x, \tau)=: \sum_{l=0}^{\infty} \sum_{m=1}^{d_{l}} \varphi_{l m}(\rho, \tau) Y_{l m}(\omega)\,,
	\end{equation}
where now $\rho\equiv\rho_\ka$ is the geodesic distance to the center of the ball $B_R$ and $\omega=x/\rho$. Notice that the coefficient $\varphi_{lm}(\rho,\tau)$ is given by
\begin{equation*}
		\varphi_{l m}(\rho, \tau)=\int_{\mathbb{S}^{d-1}} \widehat{w}_{2}(\rho, \omega, \tau) Y_{l m}(\omega) d \sigma(\omega),
	\end{equation*}where $d\sigma$ is the standard measure on the unit sphere, so $\varphi_{l m}(\rho,\tau)$ is a $C^\infty$ function of $\rho\in(0,R)$ for all $\tau$. This series converges in $L^2(B_R)$.

In particular, writing Equation~\eqref{eq.tau} in spherical geodesic coordinates, it follows that
\begin{equation*}
		\begin{aligned}
			&0 =\Delta_M \widehat{w}_{2}(x, \tau)-i \tau \widehat{w}_{2}(x, \tau) \\
			&=\sum_{l=0}^{\infty} \sum_{m=1}^{d_{l}}\left[\partial_{\rho\rho} \varphi_{lm}+(d-1)\ka\frac{\cosh(\ka \rho)}{\sinh(\ka \rho)} \partial_{\rho} \varphi_{lm}-\left(i \tau+\frac{\mu_{l}\ka^2}{\sinh(\ka \rho)^{2}}\right) \varphi_{lm}\right] Y_{lm}(\omega)
		\end{aligned}
	\end{equation*}
for all $x\in B_R$ (in the $L^2$ sense). Therefore $\varphi_{lm}$ satisfies the radial ODE
\begin{equation*}
		\partial_{\rho\rho} \varphi_{lm}+(d-1)\ka\frac{\cosh(\ka \rho)}{\sinh(\ka \rho)} \partial_{\rho} \varphi_{lm}-\left(i \tau+\frac{\mu_{l}\ka^2}{\sinh(\ka \rho)^{2}}\right) \varphi_{lm}=0
	\end{equation*}
for $0<\rho<R$ and stays bounded at $\rho=0$. The only solution to this ODE bounded at $\rho=0$ is
\begin{equation*}
		\varphi_{l m}(\rho,\tau)=A_{lm}(\tau)\left(\frac{\sinh(\ka\rho)}{\ka}\right)^{1-d/2}Q^{1-l-d/2}_\mu\left(\cosh(\ka\rho)\right)\,,
	\end{equation*}
where $A_{lm}(\tau)$ is a complex constant that may depend on $\tau$ and
\begin{equation*}\mu:=-\frac{1}{2}-\frac{1}{2\ka}\sqrt{\ka^2(d-1)^2-4i\tau}\,.\end{equation*}

We notice that the solution is an associated Legendre function of the second kind which satisfies a exponential bound:
\begin{equation}\label{cotas}
	\left|\varphi_{lm}(\rho,\tau)\right|\leq C(\tau)e^{A(\tau)\rho},
	\end{equation}
for all $\rho\in(0,\infty)$ with $A(\tau)=\frac{(1-d)\ka+\left(\ka^4(d-1)^4+16\tau^2\right)^{1/4}}{2}$.

An additional important property is that each summand of the series~\eqref{conv2} is a smooth function on $\HH^d(\ka)$ for each $\tau\in\RR$, which satisfies the Equation~\eqref{eq.tau} on the whole hyperbolic space. Standard elliptic estimates then imply that the sum~\eqref{conv2} converges on $C^k(B)$ for any integer $k$ and any smaller ball $B\subset B_R$, and the convergence is uniform for $(\rho,\tau)$ in compact subsets of $[0,R)\times\RR$. For future reference, we will fix some ball $B$ such that $\Omega\subset B\times (0,T)$.
	%\begin{equation*}\varphi_{l m}(r,\tau)=A_{lm}(\tau)r^l\pFq{2}{1}{\frac{\ka(d+2l-1)+\mu}{4\ka},\frac{\ka(d+2l-1)-\mu}{4\ka}}{\frac{d}{2}+l}\left(-\ka^2r^2\right),\end{equation*}
	%where $A_{lm}(\tau)$ is a complex constant that may depend on $\tau$, \begin{equation*}\mu:=\sqrt{\ka^2-2d\ka^2+d^2\ka^2+4i\tau},\end{equation*}and $\pFq{2}{1}{a,b}{c}(z)$ denotes the well known hypergeometric function. By undoing the change of variables, we get [Aquí necesitamos una acotación de la hypergeométrica en nuestro caso concreto que sea lo suficientemente buena para utilizar resultados de unicidad de la ecuación del calor].

	In view of the good convergence properties of the integral~\eqref{conv} and of the series~\eqref{conv2}, it is not hard to see that for any $k$ and any $\delta>0$ one can choose large enough $L$ and $\tau_0$ such that
\begin{equation*}
		\left\|w_2-w_3\right\|_{C^k(B\times(-T,T))}<\delta\,,
	\end{equation*}
where
\begin{equation*}
		w_3(x,t):=\sum_{l=0}^L\sum_{m=1}^{d_l}\int_{-\tau_0}^{\tau_0}\varphi_{lm}(\rho,\tau)Y_{lm}(\omega)e^{i\tau t}d\tau\,.
	\end{equation*}
By the previous properties, we infer that
	\begin{equation*}
		\frac{\partial w_3}{\partial t}-\Delta_{\ka}w_3=0\,,
	\end{equation*}
in $\HH^d(\ka)\times \RR$ and $w_3$ is bounded like
\begin{equation*}
	\sup _{t \in \mathbb{R}}\left|w_{3}(x, t)\right|<C e^{A\rho}
\end{equation*}
as a consequence of (\ref{cotas}).
	
Now let us consider the smooth function\begin{equation*}
		f(x):=w_3(x,0)\,.
	\end{equation*}
As $\partial_tw_3-\Delta_\ka w_3=0$ and $w_3(x,0)=f(x)$, the fact that $f$ and $w_3$ satisfy the exponential bound
$$|f(x)|+|w_3(x,t)|<Ce^{A\rho}$$
permits to invoke Grigor'yan's uniqueness theorem for the heat equation on manifolds~\cite[Lemma 2.1]{Mura} to conclude that
\begin{equation*}
	w_3(x,t)=\int_{\HH^d(\ka)}C_dH(t,\rho_M(x,y))f(y)\,\dVol(y)
	\end{equation*}
for $t>0$ and $x\in\HH^d(\ka)$. As the integral converges uniformly in $C^k$, one can take a smooth compactly supported function $v_0(\cdot):=\chi_1(\epsilon\cdot)f(\cdot)$, where $0\leq\chi_1\leq 1,\chi_1=1$ on $B_1$ and $\chi_1=0$ outside $B_2$, to conclude that
\begin{equation*}
		\left\|w_3-v\right\|_{C^k(B\times(0,T))}<\delta\,,
	\end{equation*}
	for any small enough $\epsilon$, where
\begin{equation*}
		v(x,t):=\int_{\HH^d(\ka)}C_dH(t,\rho_\ka(x,y))v_0(y)\,\dVol(y)\,.
	\end{equation*}
Putting together all the previous bounds, we finally obtain
\begin{equation*}
		\left\|v-w\right\|_{C^k(\Omega)}<C\delta\,,
	\end{equation*}
and the theorem then follows.
\end{proof}

\section*{Acknowledgements}

This work has received funding from the European Research Council (ERC) under the European Union's Horizon 2020 research and innovation programme through the Consolidator Grant agreement~862342 (A.E.\ and A.G.-R.). It is partially supported by the grants CEX2019-000904-S, RED2018-102650-T and PID2019-106715GB GB-C21 (D.P.-S.) funded by MCIN/AEI/10.13039/501100011033.

\section*{Appendix A. The heat kernel of a hyperbolic space}

In this Appendix we recall explicit formulas for the heat kernel of $\HH^d(\ka)$ (see, for example, \cite{Mandou,GrigoryanI}).
If the dimension $d$ is odd, $d=2m+1$, then
\begin{equation*}
	H_d^\ka(t,\rho)=\frac{(-1)^m}{2^m\pi^m\sqrt{4\pi t}}\left(\frac{\ka}{\sinh(\ka \rho)}\frac{\partial}{\partial \rho}\right)^m e^{-\ka^2m^2t-\rho^2/4t}\,.
\end{equation*}
If the dimension $d$ is even, $d=2 m+2$, then
\begin{equation*}
	H_d^\ka(t,\rho)=\frac{(-1)^me^{-\frac{(2m+1)^2\ka^2t}{4}}\ka}{t^{3/2}2^{m+5/2}\pi^{m+3/2}}\left(\frac{\ka}{\sinh(\ka \rho)}\frac{\partial}{\partial \rho}\right)^m\int_{\rho}^\infty\frac{se^{-s^2/4t}ds}{\sqrt{\cosh(\ka s)-\cosh(\ka \rho)}}\,.\end{equation*}

Two important recurrence formulas for heat kernels in hyperbolic spaces of different dimensions are  \begin{equation*}
	H_{d+2}^\ka(t,\rho)=-\frac{e^{-d\ka^2 t}\ka}{2 \pi \sinh (\ka \rho)} \frac{\partial}{\partial \rho} H_{d}^\ka(t,\rho)
\end{equation*}
also \begin{equation*}
	H_d^\ka(t,\rho)=\int_{\rho}^\infty \frac{e^{\frac{(2d-1)t\ka^2}{4}}H_{d+1}^\ka(t,\mu)\sinh(\ka\mu)\sqrt{2}d\mu}{\sqrt{\cosh(\ka\mu)-\cosh(\ka\rho)}}\,.
\end{equation*}
From this integral formula, one infers that there exists a positive constant $c=c(d,\ka)$ such that\begin{equation*}0\leq H_d^\ka(t,\rho)\leq  B(t,\rho)\,,\end{equation*}for all $t,\rho>0$, where \begin{equation*}B(t,\rho):= c e^{-\frac{\ka^2(d-1)^2t}{4}-\frac{(d-1)\ka\rho}{2}-\frac{\rho^2}{4t}}\frac{(1+\ka\rho+\ka^2t)^{\frac{(d-3)}{2}}(1+\ka\rho)}{(4\pi t)^{d/2}}\,.\end{equation*}In particular, for each fixed $\rho>0$, the heat kernel and its derivatives decay for large positive time as
\begin{equation*}|\partial^n_\rho\partial^k_tH_d^\ka(t,\rho)|\leq C_n t^{-3/2}e^{-\frac{t\ka^2(d-1)^2}{4}}\,.\end{equation*}
%and its derivatives decay as \begin{equation*}
%H_d^\ka(t,\rho)\leq C_\alpha e^{\frac{-t\ka^2(d-1)^2}{4}}(t^{-3/2}+t^{-3/2-2k})\,.
%\end{equation*}

\bibliographystyle{APA}

\end{document}